\documentclass[a4paper,11pt, reqno]{amsart}
\usepackage{etex}
\reserveinserts{28}
\usepackage[T1]{fontenc}
\usepackage{lmodern}
\usepackage[utf8]{inputenc}
\usepackage[english]{babel}
\usepackage[stretch=10]{microtype}
\usepackage{mathtools}
\usepackage{amsthm}
\usepackage{amssymb}
\usepackage{url}
\usepackage{enumerate}
\usepackage{bbm}
\usepackage{color,tikz,pgfplots}
\usepackage{pst-func}

\usepackage[left=2.5cm, top=2.5cm,bottom=2.5cm,right=2.5cm]{geometry}

\urlstyle{sf}
\newtheorem{theorem}{Theorem}[section]

\newtheorem{lemma}[theorem]{Lemma}
\newtheorem{proposition}[theorem]{Proposition}
\newtheorem{corollary}[theorem]{Corollary}

\theoremstyle{definition}

\newtheorem{remark}[theorem]{Remark}
\numberwithin{equation}{section}


\DeclareMathOperator{\Cov}{Cov}
\DeclareMathOperator{\Var}{Var}


\newcommand{\todistr}{\overset{d}{\underset{n\to\infty}\longrightarrow}}
\newcommand{\toas}{\overset{a.s.}{\underset{n\to\infty}\longrightarrow}}

\newcommand{\N}{\ensuremath{{\mathbb N}}}

\newcommand{\B}{\ensuremath{{\mathbb B}}}
\newcommand{\D}{\ensuremath{{\mathbb D}}}

\newcommand{\SSS}{\ensuremath{{\mathbb S}}}
\newcommand{\R}{\ensuremath{{\mathbb R}}}
\newcommand{\E}{\ensuremath{{\mathbb E}}}
\newcommand{\XX}{\ensuremath{{\mathbb X}}}
\newcommand{\YY}{\ensuremath{{\mathbb Y}}}
\newcommand{\Pro}{\ensuremath{{\mathbb P}}}

\newcommand{\vol}{\mathrm{vol}}

\newcommand{\dif}{\,\mathrm d}

\def\dint{\textup{d}}
\newcommand{\bC}{\ensuremath{{\mathbf C}}}

\def\bX{\mathbf{X}}
\def\bY{\mathbf{Y}}

\parindent 0pt


\author[Z. Kabluchko]{Zakhar Kabluchko}
\address{Zakhar Kabluchko: Institut f\"ur Mathematische Stochastik, Westf\"alische Wilhelms-Universit\"at M\"unster, Orl\'eans-Ring 10, 48149 M\"unster, Germany}
\email{zakhar.kabluchko@uni-muenster.de}

\author[J. Prochno]{Joscha Prochno}
\address{Joscha Prochno: School of Mathematics \& Physical Sciences, University of Hull, Cottingham Road,
	Hull, HU6 7RX, United Kingdom}
\email{j.prochno@hull.ac.uk}

\author[C. Th\"ale]{Christoph Th\"ale}
\address{Christoph Th\"ale: Faculty of Mathematics, Ruhr University Bochum, Universit\"atsstra\ss e 150, 44780 Bochum, Germany}
\email{christoph.thaele@rub.de}


\keywords{Asymptotic geometric analysis, central limit theorem, extreme value distribution, high dimensions, large deviation principle, $\ell_p$-ball, multivariate central limit theorem, non-central limit theorem}
\subjclass[2010]{52A22, 60D05, 60F05, 60F10}

\begin{document}

\title[High-dimensional limit theorems for $\ell_p^n$-balls]{High-dimensional limit theorems\\ for random vectors in $\ell_p^n$-balls}

\begin{abstract}
In this paper, we prove a multivariate central limit theorem for $\ell_q$-norms of high-dimensional random vectors that are chosen uniformly at random in an $\ell_p^n$-ball. As a consequence, we provide several applications on the intersections of $\ell_p^n$-balls in the flavor of Schechtman and Schmuckenschl\"ager and obtain a central limit theorem for the length of a projection of an $\ell_p^n$-ball onto a line spanned by a random direction $\theta\in\SSS^{n-1}$. The latter generalizes results obtained for the cube by Paouris, Pivovarov and Zinn and by Kabluchko, Litvak and Zaporozhets. Moreover, we complement our central limit theorems by providing a complete description of the large deviation behavior, which covers fluctuations far beyond the Gaussian scale. In the regime $1\leq p < q$ this displays in speed and rate function deviations of the $q$-norm on an $\ell_p^n$-ball obtained by Schechtman and Zinn, but we obtain explicit constants.
\end{abstract}
\maketitle


\section{Introduction and main results}

Understanding geometric structures in high dimensions has become increasingly important and the last decade has seen a number of breakthrough results, many of a probabilistic flavor, that unfold various unexpected phenomena as well as a certain regularity that occurs in high-dimensional spaces. At the very heart of these discoveries lies the young theory of Asymptotic Geometric Analysis, where deep ideas and methods from analysis, geometry and probability theory meet in a highly non-trivial way. More information on this vivid and promising field can be found, for instance, in the survey articles \cite{G114,G214} and the recent monographs \cite{AsymGeomAnalyBook,GeometryICB}.

Historically, the first high-dimensional central limit theorem was the result of Poincar\'e and Borel, showing that the distribution of the first $k$ coordinates of a uniformly distributed random point in the $n$-dimensional Euclidean ball or on the Euclidean sphere converges to a $k$-dimensional Gaussian distribution as $n\to\infty$, see \cite{DiaconisFreedman} (including a historical discussion) and also \cite{Stam82} for the case that $k$ grows simultaneously with $n$. Arguably one of most prominent results in this direction is Klartag's central limit theorem for convex bodies \cite{KlartagCLT,KlartagCLT2}, a geometric counterpart to the classical central limit theorem. Roughly speaking, it says that most $k$-dimensional marginals of a random vector uniformly distributed in a convex body are approximately Gaussian, provided that $k=k(n) < n^\kappa$ with $\kappa<1/14$. This central limit theorem for convex bodies was already conjectured by Anttila, Ball and Perissinaki in \cite{ABP2003} (for $k=1$), where it has been verified for the case of uniform distributions on convex sets under some additional assumptions. Recent years have seen emerging several other central limit phenomena for various quantities arising in Asymptotic Geometric Analysis. For instance, Paouris, Pivovarov and Zinn \cite{PPZ14} obtained a central limit theorem for the volume of $k$-dimensional random projections ($k$ fixed) of the $n$-dimensional cube as the space dimension $n$ tends to infinity. In the particular case that $k=1$, their result passes over to a central limit theorem for the $\ell_1$-norm $\Vert\theta\Vert_1$ of a point $\theta\in\SSS^{n-1}$ chosen uniformly at random. Independently and by a different method this has also been obtained in \cite[Theorem 3.6]{KLZ15pomi} by Kabluchko, Litvak and Zaporozhets. It is one of the main goals of the present paper to complement the former geometric motivated central limit results and to explore further the Gaussian fluctuations within the latter framework. More precisely, we prove a multivariate central limit theorem for the $\ell_q$-norm of high-dimensional random vectors that are chosen uniformly at random in an $\ell_p^n$-ball $\B_p^n$, thereby deriving a multi-dimensional version of a result of Schechtman and Schmuckenschl\"ager \cite{SS} and Schmuckenschl\"ager \cite{Schmuchenschlaeger2001}. Our central limit theorem is accompanied by several applications on the geometry of $\ell_p^n$-balls in the flavor of Schechtman and Schmuckenschl\"ager. Moreover, for special constellations of the involved parameters $p$ and $q$ we also observe non-central limiting behaviors with exponential or Gumbel limiting distributions.

While central limit theorems underline the universal behavior of Gaussian fluctuations, it is widely known in probability theory that the large deviation behavior, where one considers fluctuations far beyond the scale of the central limit theorem, is much more sensitive to the involved random elements. In the field of Asymptotic Geometric Analysis, the so-called large deviation principle (LDP) only recently entered the stage. The main motivation to study these for random vectors uniformly distributed in convex bodies is to access non-universal features that remain `unseen' when normal fluctuations are considered, and therefore to unveil properties that allow to distinguish between different convex bodies in high-dimensions. In \cite{GantertKimRamanan}, the authors proved an LDP for 1-dimensional projections of random vectors uniformly distributed in the important class of $\ell_p^n$-balls. Their result, in the annealed case, was extended to the Grassmannian setting of higher-dimensional subspaces in \cite{APT16}, where it was proved that the Euclidean norm of the orthogonal projection of a random vector uniformly distributed in $\B_p^n$ onto a random subspace satisfies an LDP. The key ingredient in the proof was a probabilistic representation in the spirit of Schechtman and Zinn \cite{SZ}. Since therein the rotational invariance of the Euclidean ball plays an essential r{\^o}le it is not clear how the result can be extended to consider general $\ell_q$-norms of random vectors instead of the $\ell_2$-norm. In the spirit of this paper, where we do not consider orthogonal projections, but rather the norm of the random vectors itself, we are able to prove such LDPs for $\ell_q$-norms of sequences of random points that are chosen independently and uniformly at random in an $\ell_p$-ball for all possible regimes of $p$ and $q$. This complements on the one hand the study on central and non-central limit theorems in the first part of this paper and at the same time contributes to the recent line of research on the large deviation behavior of geometric quantities that appear in Asymptotic Geometric Analysis. Let us also remark that in the regime $1\leq p < q$ our large deviation principle in Theorem \ref{thm:LDPp<q} displays exactly the behavior known from the deviation result of Schechtman and Zinn \cite[Theorem 3]{SZ} (see also \cite{SZ2} and Naor's work \cite{NaorConeSurface}). In fact, our result is the asymptotic version of that of Schechtman and Zinn. Although neither one implies the other, our result comes with explicit constants and, for deviation parameters $z$ from a fixed interval (compare with Subsection \ref{subsec:schechtman-zinn}), our result is optimal and indeed stronger. 

Let us now present the main theorems of this work. We start with the central and non-central limit theorems and then present the corresponding large deviation counterparts.
For $p\in[1,\infty]$ and $x=(x_1,\ldots,x_n)\in\R^n$, $n\in\N$, we put
$$
\|x\|_p := \begin{cases}
\Big(\sum\limits_{i=1}^n|x_i|^p\Big)^{1/p} &: p<\infty\\
\max\limits_{1\leq i\leq n}|x_i|&: p=\infty\,.
\end{cases}
$$
By $\B_p^n$ we denote the unit ball in $\R^n$ with respect to the $\|\cdot\|_p$-norm, that is,
$$
\B_p^n=\{x\in\R^n:\|x\|_p\leq 1\}\,.
$$
Recall that a random variable is said to be $p$-generalized Gaussian ($1\leq p< \infty$) if it has a density
\begin{align}\label{eq:pGeneralizedGaussianDensity}
f_p(x)=\frac{e^{-|x|^p/p}}{2p^{1/p}\Gamma(1+1/p)}\,,\qquad x\in\R\,,
\end{align}
with respect to the Lebesgue measure on $\R$. We extend this definition to the case $p=\infty$ by letting the $\infty$-generalized Gaussian be a random variable with uniform distribution on $[-1,1]$. We refer to Section \ref{sec:notation and prelim} for any unexplained notion or notation.

\subsection{Main results -- Part A: Central and non-central limit theorems}

Let us introduce the following quantities:
\begin{equation}\label{eq:Mpr}
M_p(r):=\frac{p^{r/p}}{r+1}\,\frac{\Gamma(1+\frac{r+1}{p})}{\Gamma(1+\frac{1}{p})}\qquad(p<\infty)\,,\qquad M_\infty(r):=\frac {1} {r+1}
\end{equation}
and
\[
C_p(r,s):=M_p(r+s) - M_p(r)M_p(s)\,,
\]
where $r,s\geq 0$ and $p\in[1,\infty]$. As we shall see in Lemma \ref{lem:expectation} below, they represent moments and covariances of $p$-generalized Gaussian random variables. We use the convention that $M_\infty(\infty)=0$ and $C_\infty(\infty,\infty)=C_\infty(\infty,q)=0$.

Part (a) of the following result is the multivariate generalization of \cite[Proposition 2.4]{Schmuchenschlaeger2001}. We denote by $\todistr$ convergence in distribution as an involved parameter $n$ tends to infinity.

\begin{theorem}\label{thm:CLTlpBall}
Let $1\leq p\leq\infty$ and $d,n\in\N$. Assume that $1\leq q_1<\ldots<q_d\leq\infty$ and $Z=Z_n$ is a random vector uniformly distributed in $\B_p^n$.
\begin{itemize}
\item[(a)] If $p\neq q_i$ for all $i\in\{1,\ldots,d\}$ and $q_d<\infty$, then we have the multivariate central limit theorem
\[
\sqrt{n}\,\bigg(n^{1/p-1/q_i}\frac{\|Z\|_{q_i}}{M_p(q_i)^{1/q_i}}-1\bigg)_{i=1}^d \todistr N\,,
\]
where $N$ is a centered Gaussian random vector with covariance matrix ${\bf C}=(c_{ij})_{i,j=1}^d$ and
\begin{align*}
c_{ij} &=\frac{C_p(q_i,q_j)}{q_iq_jM_p(q_i)M_p(q_j)}+{C_p(p,p)\over p^2}- \frac{1}{p}\bigg({C_p(q_i,p)\over q_iM_p(q_i)}+{C_p(q_j,p)\over q_jM_p(q_j)}\bigg)\\
&=\begin{cases}
{1\over q_iq_j}\bigg({\Gamma\big({1\over p}\big)\Gamma\big({q_i+q_j+1\over p}\big)\over\Gamma\big({q_i+1\over p}\big)\Gamma\big({q_j+1\over p}\big)}-1\bigg)-{1\over p} &: p<\infty\\
{1\over q_i+q_j+1} &: p=\infty\,.
\end{cases}
\end{align*}
\item[(b)] Assume that $d=1$ and $p=q:=q_1\in[1,\infty]$. Then, we have the non-central limit theorem
\[
n\big(1-\|Z\|_q\big) \todistr E,
\]
where $E$ is an exponential random variable with mean $1$.
\item[(c)] Assume that $d=1$ and $p\in[1,\infty)$. Then, we have the non-central limit theorem
$$
{n^{1/p}\over (p\log n)^{{1\over p}-1}}\|Z\|_\infty -A_n^{(p)}  \todistr G,
$$
where
$$
A_n^{(p)} := p\log n-{1\over p}\Big((1-p)\log(p\log n)+p\log K\Big)\Big)\quad\text{with}\quad K={1\over p^{1/p}\Gamma\big(1+{1\over p}\big)}
$$
and where $G$ is a Gumbel random variable with distribution function $F_G(t)=e^{-e^{-t}}$, $t\in\R$.
\end{itemize}
\end{theorem}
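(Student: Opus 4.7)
The common backbone for all three parts is the Schechtman--Zinn probabilistic representation of the uniform distribution on $\B_p^n$: writing $X_1,\dots,X_n$ for i.i.d.\ $p$-generalized Gaussians and $U\sim\text{Uniform}(0,1)$ independent of them, one has
\[
Z \stackrel{d}{=} U^{1/n}\,\mathbf X/\|\mathbf X\|_p,\qquad \mathbf X=(X_1,\dots,X_n).
\]
Under this representation every $\ell_{q_i}$-norm of $Z$ becomes $U^{1/n}$ times a ratio of sample moments of the $|X_j|$'s, whose population moments and covariances are precisely $M_p(r)$ and $C_p(r,s)$ (in particular $M_p(p)=1$, which will be used constantly). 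The factor $U^{1/n}=1+O_\Pro(1/n)$ is negligible on both the $\sqrt n$ scale of part~(a) and the $(p\log n)^{(p-1)/p}$ scale of part~(c), so Slutsky's lemma absorbs it throughout.

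For part (a) the strategy is a multivariate CLT combined with the delta method. Introduce $\bar Y_i:= n^{-1}\sum_{j=1}^n|X_j|^{q_i}$ for $i=1,\dots,d$ and $\bar Y_{d+1}:= n^{-1}\sum_{j=1}^n|X_j|^p$, so that
\[
n^{1/p-1/q_i}\,\frac{\|Z\|_{q_i}}{M_p(q_i)^{1/q_i}} \;=\; U^{1/n}\,g_i\bigl(\bar Y_1,\dots,\bar Y_{d+1}\bigr),\qquad g_i(y):=\frac{y_i^{1/q_i}}{y_{d+1}^{1/p}\,M_p(q_i)^{1/q_i}},
\]
with $g_i$ equal to $1$ at the population mean $(M_p(q_1),\dots,M_p(q_d),1)$. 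The classical multivariate CLT for the i.i.d.\ vectors $(|X_j|^{q_1},\dots,|X_j|^{q_d},|X_j|^p)$ produces a Gaussian limit whose covariance has entries $C_p(q_i,q_j)$, $C_p(q_i,p)$ and $C_p(p,p)$. Computing
\[
\partial_{y_i}g_i = \frac{1}{q_iM_p(q_i)},\qquad \partial_{y_{d+1}}g_i = -\frac 1p,\qquad \partial_{y_j}g_i = 0\ (j\ne i,d+1)
\]
at the mean and applying the delta method outputs precisely the asserted covariance $c_{ij}$; substituting the definitions of $M_p$ and $C_p$ and collecting gamma factors yields the closed form (and its $p=\infty$ simplification).

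Part (b) is essentially a one-line consequence of the representation: $\|Z\|_p=U^{1/n}$ for $p<\infty$, and $n(1-U^{1/n})\todistr-\log U\sim\text{Exp}(1)$; for $p=\infty$ the same conclusion follows from $\Pro(\|Z\|_\infty\le 1-t/n)=(1-t/n)^n\to e^{-t}$. Part (c) again uses the representation, now combined with extreme-value theory: since $\|\mathbf X\|_p=n^{1/p}(1+O_\Pro(n^{-1/2}))$ by the LLN, we have $n^{1/p}\|Z\|_\infty = M_n(1+o_\Pro(1))$ with $M_n:=\max_j|X_j|$. The density $K e^{-x^p/p}$ of $|X_1|$ yields the tail $\bar F(x)\sim K x^{1-p}e^{-x^p/p}$, placing $|X_1|$ in the Gumbel max-domain of attraction; choosing $a_n$ from $n\bar F(a_n)\to 1$ and $b_n:=a_n^{1-p}$, classical EVT gives $(M_n-a_n)/b_n\todistr G$. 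Multiplying through by $(p\log n)^{(p-1)/p}=b_n^{-1}(1+o(1))$, and iterating once the defining equation for $a_n$ (using $\log K = \tfrac{p-1}{p}\log p - \log\Gamma(1/p)$), rearranges the centering into the displayed $A_n^{(p)}$.

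The delicate step is part (c): one must carry the asymptotic expansion of $a_n$ far enough to pin down the $\log\log n$ correction inside $A_n^{(p)}$, and verify that the $O_\Pro(n^{-1/2})$ fluctuation of $\|\mathbf X\|_p$ and the $O_\Pro(1/n)$ correction from $U^{1/n}$, when multiplied by the leading order $(p\log n)^{(p-1)/p}\cdot a_n\asymp p\log n$, still vanish in probability -- which reduces to $n^{-1/2}\log n\to 0$. Parts (a) and (b) are conceptually routine once the delta method is set up.
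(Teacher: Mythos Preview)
Your proposal is correct and follows essentially the same approach as the paper: the Schechtman--Zinn representation, a multivariate CLT for the vector of sample moments $(|X_j|^{q_1},\dots,|X_j|^{q_d},|X_j|^p)$ followed by linearization, and classical Gumbel extreme-value asymptotics for part~(c). The only cosmetic difference is that where you invoke the delta method and Slutsky's lemma directly, the paper passes through the Skorokhod--Dudley almost-sure representation and Taylor expands by hand; the computations and resulting covariances are identical.
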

\subsection{Main results -- Part B: Large deviation principles}

We now present the main theorems on the scale of large deviations. The two regimes $p>q$ and $p<q$ need to be treated differently as they exhibit a different behavior from a rate function and speed point of view. We start with the case $q<p$.

\begin{theorem}\label{thm:LDPp>q}
Assume that $1\leq p <\infty$ and let $Z$ be uniformly distributed on $\B_p^n$.
If $1\leq q<p$, the sequence $\|{\bf Z}\|:=(n^{1/p-1/q}\|Z\|_q)_{n\in\N}$ satisfies an LDP with speed $n$ and good rate function
\[
\mathcal{I}_{\bf \|Z\|} (z) = \begin{cases}
\inf\limits_{z=z_1z_2\atop z_1,z_2\geq 0}[ {\mathcal I}_1(z_1)+{\mathcal I}_2(z_2)] &: z\geq 0\\
+\infty &:\text{otherwise}\,.
\end{cases}
\]
Here
$$
{\mathcal I}_1(z) = \begin{cases}
-\log z &: z\in(0,1]\\
+\infty &:\text{otherwise}
\end{cases}
\qquad\text{and}\qquad
\mathcal I_2(z) =
 \begin{cases}
\inf\limits_{x\geq 0,y>0\atop{x^{1/q}y^{-1/p}}=z} \Lambda^*(x,y) &: z\geq 0\\
+\infty &: z<0\,,
\end{cases}
$$
where $\Lambda^*$ is the Legendre-Fenchel transform of the function
\[
\Lambda(t_1,t_2):=\log \int_{\R} e^{t_1|s|^q+t_2|s|^p}\frac{e^{-|s|^p/p}}{2p^{1/p}\Gamma(1+1/p)}\dif s\,.
\]
\end{theorem}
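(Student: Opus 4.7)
The plan is to reduce the statement to Cramér's theorem in $\R^2$ via the Schechtman--Zinn probabilistic representation of the uniform distribution on $\B_p^n$, and then to apply the contraction principle twice. Concretely, writing $X_1,\ldots,X_n$ for i.i.d.\ $p$-generalized Gaussians (with density $f_p$ from \eqref{eq:pGeneralizedGaussianDensity}) and $U$ for an independent uniform variable on $[0,1]$, one has the distributional identity
$$
n^{1/p-1/q}\|Z\|_q \;\stackrel{d}{=}\; U^{1/n}\cdot \frac{\bigl(n^{-1}\sum_{i=1}^n |X_i|^q\bigr)^{1/q}}{\bigl(n^{-1}\sum_{i=1}^n |X_i|^p\bigr)^{1/p}}\;=:\; Z_1^{(n)}\cdot Z_2^{(n)},
$$
with the two factors independent. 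The rate function in the theorem has exactly the shape of a product contraction, so this is the route I would take.

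The LDP for $Z_1^{(n)}=U^{1/n}$ is elementary: from $\Pro(Z_1^{(n)}\leq z)=z^n$ for $z\in[0,1]$ one reads off the good rate function $\mathcal I_1$ directly. For $Z_2^{(n)}$ I would apply Cramér's theorem in $\R^2$ to the i.i.d.\ random vectors $V_i:=(|X_i|^q,|X_i|^p)$. Their logarithmic moment generating function is precisely the $\Lambda$ given in the statement, and the hypothesis $q<p$ is essential here: it guarantees that $e^{t_1|s|^q+(t_2-1/p)|s|^p}$ is Lebesgue integrable for every $t_1\in\R$ as soon as $t_2<1/p$, so $\Lambda$ is finite on an open neighborhood of the origin, which is the condition needed for Cramér. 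Hence $n^{-1}\sum_{i=1}^n V_i$ satisfies an LDP in $[0,\infty)^2$ with good rate function $\Lambda^*$.

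To pass from this to an LDP for $Z_2^{(n)}$, I would invoke the contraction principle along $F(x,y):=x^{1/q}y^{-1/p}$, which is continuous on $[0,\infty)\times(0,\infty)$, yielding $\mathcal I_2$ as the claimed infimum. Finally, by independence of $U$ and the $X_i$, the pair $(Z_1^{(n)},Z_2^{(n)})$ satisfies an LDP in $[0,1]\times[0,\infty)$ with good rate function $\mathcal I_1(z_1)+\mathcal I_2(z_2)$, and a second contraction along the continuous map $(z_1,z_2)\mapsto z_1z_2$ produces the announced formula for $\mathcal I_{\|{\bf Z}\|}$.

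The main technical obstacle is the partial discontinuity of the contraction map $F$ at $y=0$. I would handle this by checking that $\Lambda^*(x,0)=+\infty$: as $t_2\to-\infty$, dominated convergence forces $\Lambda(t_1,t_2)\to-\infty$ for every fixed $t_1$, hence $\sup_{t_2}[-\Lambda(t_1,t_2)]=+\infty$, so the effective domain of $\Lambda^*$ lies in the half-plane $\{y>0\}$ where $F$ is continuous; this legitimises the contraction step. The remaining verifications (goodness of the rate functions at each stage, and the behaviour at the boundary $z=0$ or $z_1=0$) are then routine consequences of the standard large-deviations machinery.
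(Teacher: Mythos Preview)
Your proposal is correct and follows essentially the same route as the paper: Schechtman--Zinn representation, Cram\'er in $\R^2$ for the vectors $(|X_i|^q,|X_i|^p)$, contraction via $F(x,y)=x^{1/q}y^{-1/p}$ to obtain $\mathcal I_2$, the elementary LDP for $U^{1/n}$ giving $\mathcal I_1$, independence to pass to the pair, and a final product contraction. The paper handles the discontinuity of $F$ at $y=0$ in the same way you do, by noting that $\Lambda^*$ is $+\infty$ outside $[0,\infty)\times(0,\infty)$ so the LDP can be restricted there before contracting.
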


\bigskip

The dual regime where $q>p$ requires different methods, namely large deviations for sums of so-called stretched exponentials. In that case, we have the following result, now with a fully explicit rate function and a speed which is slower than the one of the regime $q<p$.

\begin{theorem}\label{thm:LDPp<q}
Assume that $1\leq p<\infty$ and let $Z$ be uniformly distributed on $\B_p^n$.
If $p<q<\infty$, then the sequence $\|{\bf Z}\|:=(n^{1/p-1/q}\|Z\|_q)_{n\in\N}$ satisfies an LDP with speed $n^{p/q}$ and good rate function
$$
{\mathcal I}_{\bf \|Z\|} (z) = \begin{cases}
{1\over p}\big(z^q-M_p(q)\big)^{p/q} &: z\geq M_p(q)^{1/q}\\
+\infty &: \text{otherwise}\,.
\end{cases}
$$
\end{theorem}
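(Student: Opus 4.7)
\textbf{Proof plan for Theorem \ref{thm:LDPp<q}.} The plan is to combine the Schechtman--Zinn probabilistic representation with a large deviation principle for sums of stretched-exponential random variables. By the Schechtman--Zinn representation, $Z$ has the same distribution as $U^{1/n}\,X/\|X\|_p$, where $X=(X_1,\ldots,X_n)$ has i.i.d.\ $p$-generalized Gaussian entries with density \eqref{eq:pGeneralizedGaussianDensity} and $U$ is uniform on $[0,1]$, independent of $X$. Writing $S_n:=\sum_{i=1}^n|X_i|^q$ and $T_n:=\sum_{i=1}^n|X_i|^p$, we therefore have
\[
n^{1/p-1/q}\|Z\|_q \;=\; U^{1/n}\cdot \frac{(S_n/n)^{1/q}}{(T_n/n)^{1/p}}.
\]
By Lemma \ref{lem:expectation}, $\mathbb{E}|X_1|^p = M_p(p) = 1$ and $\mathbb{E}|X_1|^q = M_p(q)$, so $T_n/n\to 1$ and $S_n/n\to M_p(q)$ almost surely.

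The central step is the LDP for $S_n/n$ at speed $n^{p/q}$. Since $q>p$, the random variables $Y_i:=|X_i|^q$ have a stretched-exponential tail, namely $\Pro(Y_1\geq y)\sim c\exp(-y^{p/q}/p)$ as $y\to\infty$ (this follows by substituting $y=s^q$ in the density \eqref{eq:pGeneralizedGaussianDensity}). Hence the moment generating function of $Y_1$ is infinite on $(0,\infty)$ and classical Cram\'er theory gives a trivial rate function at speed $n$. Instead, the correct scale is $n^{p/q}$, and the asymptotics is governed by the ``one big jump'' principle: for $a> M_p(q)$,
\[
\Pro(S_n\geq na)\;\sim\; n\,\Pro\bigl(Y_1\geq n(a-M_p(q))\bigr)\;\sim\; n\exp\!\Bigl(-\tfrac{n^{p/q}}{p}(a-M_p(q))^{p/q}\Bigr).
\]
The matching lower bound is produced by forcing a single summand to take the required value while the remaining $n-1$ stay near their mean (using the classical CLT for the rest). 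Making this precise via the results of Nagaev and Gantert--Ramanan--Rembart on sums of sub-exponential/stretched-exponential random variables yields the LDP for $S_n/n$ with speed $n^{p/q}$ and good rate function $\widetilde{\mathcal I}(a)=\frac{1}{p}(a-M_p(q))^{p/q}$ for $a\geq M_p(q)$, and $+\infty$ otherwise.

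Next we upgrade this scalar LDP to the full ratio. The quantities $U^{1/n}$ and $T_n/n$ are \emph{superexponentially concentrated at speed $n^{p/q}$} around their limits $1$ and $1$: indeed $\Pro(|U^{1/n}-1|>\delta)$ decays like $e^{-c_\delta n}$, and a standard Cram\'er argument for sums of $|X_i|^p$ (which are Gamma-distributed with exponential moments) gives $\Pro(|T_n/n-1|>\delta)\leq e^{-c_\delta n}$; since $p/q<1$, we have $n=o(n^{p/q})$ in the wrong direction, i.e.\ $n^{p/q}=o(n)$, so both probabilities are negligible at our speed. A soft ``replacement lemma'' (standard in large deviations: an LDP at speed $v_n$ is unaffected by multiplication by factors that are superexponentially close to a constant at speed $v_n$) then reduces the problem to the LDP for $(S_n/n)^{1/q}$. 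Applying the contraction principle along the continuous, strictly increasing map $a\mapsto a^{1/q}$, with $z=a^{1/q}$, transforms $\widetilde{\mathcal I}(a)$ into $\mathcal I_{\|\bZ\|}(z)=\frac{1}{p}(z^q-M_p(q))^{p/q}$ on $z\geq M_p(q)^{1/q}$, proving the theorem.

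The main obstacle is the LDP for $S_n/n$. Since the Legendre transform of $\Lambda(t):=\log\mathbb{E} e^{tY_1}$ is degenerate (it equals $0$ for all $a\geq \mathbb{E} Y_1$, because $\Lambda(t)=+\infty$ for $t>0$), Cram\'er's theorem gives no information and one must genuinely work at the sub-linear speed $n^{p/q}$. The two directions require different ingredients: the upper bound follows from a union bound combined with a truncation argument that controls the contribution of the ``typical'' summands through a Chernoff inequality on the truncated variables, while the lower bound rests on forcing one coordinate to be of order $n^{1/q}$ and applying a weak law to the remaining coordinates, where the precise control of the tail constant $1/p$ arising from \eqref{eq:pGeneralizedGaussianDensity} is essential to obtain the explicit rate function.
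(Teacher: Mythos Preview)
Your proposal is correct and follows essentially the same route as the paper: the Schechtman--Zinn representation, the stretched-exponential LDP of Gantert--Ramanan--Rembart for $S_n/n$ at speed $n^{p/q}$, the contraction principle along $a\mapsto a^{1/q}$, and finally an exponential-equivalence argument showing that the factors $U^{1/n}$ and $(T_n/n)^{-1/p}$ can be replaced by $1$ because their deviations from $1$ decay at speed $n$, which is faster than $n^{p/q}$. The paper carries out the exponential-equivalence step by explicitly bounding the probability that the difference exceeds $\delta$ and letting an auxiliary $\varepsilon\to 0$, but this is exactly your ``replacement lemma''.
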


\bigskip

In the regime where $1\leq p<\infty$ the remaining case $p=q$ leads to the following result.

\begin{theorem}\label{thm:LDPp=q}
Assume that $1\leq p<\infty$ and let $Z$ be uniformly distributed on $\B_p^n$. Then the sequence $\|{\bf Z}\|:=(\|Z\|_p)_{n\in\N}$ satisfies an LDP with speed $n$ and good rate function $\mathcal I_1$ defined in Theorem \ref{thm:LDPp>q}.
\end{theorem}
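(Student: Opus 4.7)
The plan is to reduce the statement to the fact that, in this coincident regime, the radius $R_n:=\|Z\|_p$ has a distribution that is explicit and independent of the particular value of $p$. Since $\B_p^n$ is star-shaped about the origin and positively $1$-homogeneous, $\vol(\{x\in\B_p^n:\|x\|_p\leq r\})=r^n\vol(\B_p^n)$ for every $r\in[0,1]$, so that $\Pro(R_n\leq r)=r^n$ on $[0,1]$. Equivalently, $R_n\stackrel{d}{=}U^{1/n}$ with $U$ uniformly distributed on $(0,1)$, or $-n\log R_n\sim\Exp(1)$ for every $n$.

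Once this is recorded, the LDP can be verified directly rather than invoked from a general theorem. For the upper bound on a closed set $F\subseteq\R$, I would set $b:=\sup(F\cap(0,1])$, with the convention $b=0$ and $\log 0=-\infty$ when this intersection is empty, and use
\[
\Pro(R_n\in F)\leq\Pro(R_n\leq b)=b^n
\]
to obtain $\limsup_{n\to\infty}\tfrac{1}{n}\log\Pro(R_n\in F)\leq\log b=-\inf_{z\in F}\mathcal I_1(z)$. For the lower bound on an open set $G\subseteq\R$, I would pick an arbitrary $z_0\in G\cap(0,1]$ (there is nothing to show if this intersection is empty) together with $\varepsilon>0$ small enough that $(z_0-\varepsilon,z_0]\subseteq G$, and exploit
\[
\Pro(R_n\in G)\geq z_0^n-(z_0-\varepsilon)^n,
\]
letting $n\to\infty$ and then $z_0\to\sup(G\cap(0,1])$ to extract the matching bound $-\inf_{z\in G}\mathcal I_1(z)$.

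Finally, $\mathcal I_1$ is a good rate function because its sublevel set $\{\mathcal I_1\leq\alpha\}=[e^{-\alpha},1]$ is compact for each $\alpha\geq 0$ (and empty for $\alpha<0$), and lower semicontinuity is clear from the formula. The entire argument is elementary one-variable calculus, and the only remotely non-trivial step is the initial observation that $\Pro(R_n\leq r)=r^n$; in particular, unlike in Theorems~\ref{thm:LDPp>q} and~\ref{thm:LDPp<q}, neither a Cram\'er-type analysis nor the stretched-exponential machinery is required in the coincident regime $p=q$.
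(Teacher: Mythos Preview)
Your proposal is correct. The key observation---that $\|Z\|_p\stackrel{d}{=}U^{1/n}$ for $U$ uniform on $[0,1]$---is exactly the one the paper relies on (it is immediate from Lemma~\ref{lem:SZ}, since $\|Y\|_p/\|Y\|_p=1$), and the paper then simply quotes the LDP for $(U^{1/n})_{n\in\N}$ from \cite[Lemma~3.3]{GantertKimRamanan}, just as in the proof of Theorem~\ref{thm:LDPpInfinity}\,(b). You instead verify that LDP by hand via the explicit distribution function, which is more self-contained but not a different route; the argument is sound, including the upper bound via $b=\sup(F\cap(0,1])$ (here closedness of $F$ guarantees $b\in F$, so the infimum is attained) and the lower bound via the elementary estimate $z_0^n-(z_0-\varepsilon)^n=z_0^n\bigl(1-(1-\varepsilon/z_0)^n\bigr)$.
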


In the special case that $p=\infty$, where the $p$-generalized Gaussian distribution reduces to the uniform distribution on $[-1,1]$, we obtain the following theorem.

\begin{theorem}\label{thm:LDPpInfinity}
Let $Z$ be uniformly distributed on $\B_\infty^n$.
\begin{itemize}
\item[(a)]
If $1\leq q<\infty$, the sequence $\|{\bf Z}\|:=(n^{-1/q}\|Z\|_q)_{n\in\N}$ satisfies an LDP with speed $n$ and good rate function
$$
{\mathcal I}_{\bf \|Z\|} (z) = \begin{cases}
\mathcal{J}^*(z) &: z\geq 0\\
+\infty &: \text{otherwise}\,,
\end{cases}
$$
where $\mathcal{J}^*$ is the Legendre-Fenchel transform of the function $\mathcal{J}(z) = {1\over 2}\int_{-1}^1e^{z|t|^q}\dif t.$
\item[(b)] If $p=q=\infty$, the sequence $\|{\bf Z}\|:=(\|Z\|_\infty)_{n\in\N}$ satisfies an LDP with speed $n$ and good rate function $\mathcal I_1$ defined in Theorem \ref{thm:LDPp>q}.
\end{itemize}
\end{theorem}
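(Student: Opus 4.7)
My approach exploits the elementary fact that for $p=\infty$ the $\infty$-generalized Gaussian is uniform on $[-1,1]$ and $\B_\infty^n=[-1,1]^n$, so a random vector $Z$ uniformly distributed in $\B_\infty^n$ admits the representation $Z=(U_1,\ldots,U_n)$ with $U_1,\ldots,U_n$ i.i.d.\ uniform on $[-1,1]$. This sidesteps the Schechtman--Zinn probabilistic representation used in the previous large deviation theorems and reduces both parts to classical tools.

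For part (a), I would write
\[
n^{-1/q}\|Z\|_q=\Big(\tfrac{1}{n}\sum_{i=1}^n|U_i|^q\Big)^{1/q}
\]
and apply Cram\'er's theorem to the empirical mean $\tfrac{1}{n}\sum_{i=1}^n|U_i|^q$. Since $|U_1|^q\in[0,1]$, its moment generating function $\E[\eu^{t|U_1|^q}]=\mathcal{J}(t)$ is finite on all of $\R$, and Cram\'er's theorem yields an LDP on $\R$ with speed $n$ and good rate function equal to the Legendre--Fenchel transform of $\log\mathcal{J}$. The contraction principle, applied to the continuous bijection $x\mapsto x^{1/q}$ from $[0,\infty)$ onto itself, then transfers this LDP to $(n^{-1/q}\|Z\|_q)_{n\in\N}$ and produces the announced rate function on $[0,\infty)$ and $+\infty$ elsewhere.

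For part (b), I would use that $\|Z\|_\infty=\max_{1\leq i\leq n}|U_i|$ has distribution function $\Pro(\|Z\|_\infty\leq z)=z^n$ for $z\in[0,1]$. Since $\mathcal{I}_1(z)=-\log z$ is decreasing on $(0,1]$ and equals $+\infty$ outside, the upper bound for closed sets $F\subset\R$ follows from the elementary estimate $\Pro(\|Z\|_\infty\in F)\leq(z^*)^n$ with $z^*:=\sup(F\cap[0,1])$; the matching lower bound for open sets $G\subset\R$ follows by choosing $z_0\in G\cap(0,1]$ close to $\inf_G\mathcal{I}_1$, fixing $\varepsilon>0$ with $(z_0-\varepsilon,z_0]\subset G$, and noting that $\Pro(\|Z\|_\infty\in(z_0-\varepsilon,z_0])=z_0^n-(z_0-\varepsilon)^n\asymp z_0^n=\eu^{n\log z_0}$ on the exponential scale.

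There is no genuine obstacle here: the independence of coordinates trivialises everything, and the only bookkeeping item is the identification in part (a) of the contracted rate function $z\mapsto\Lambda^*(z^q)$ (with $\Lambda=\log\mathcal{J}$) with the expression $\mathcal{J}^*$ stated in the theorem, which is a routine substitution in the variational problem.
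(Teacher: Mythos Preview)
Your approach for part (a) is identical to the paper's: represent $n^{-1/q}\|Z\|_q$ via i.i.d.\ uniforms on $[-1,1]$, apply Cram\'er's theorem to the empirical mean $\frac{1}{n}\sum_{i}|U_i|^q$, then contract with $x\mapsto x^{1/q}$. For part (b) the paper takes a marginally different route: it observes that $\|Z\|_\infty\overset{d}{=}U^{1/n}$ with $U$ uniform on $[0,1]$ and invokes the LDP for $(U^{1/n})_{n\in\N}$ already quoted in the proof of Theorem~\ref{thm:LDPp>q}, whereas you verify the LDP directly from the explicit distribution function $z\mapsto z^n$. Both arguments are correct and equally short; yours is self-contained.

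One point deserves comment. You correctly observe that Cram\'er's theorem yields the Legendre--Fenchel transform of $\log\mathcal{J}$, not of $\mathcal{J}$, and then dismiss the identification with the stated $\mathcal{J}^*$ as a ``routine substitution in the variational problem.'' It is not: $(\log\mathcal{J})^*(x)=\sup_t\big(tx-\log\mathcal{J}(t)\big)$ and $\mathcal{J}^*(x)=\sup_t\big(tx-\mathcal{J}(t)\big)$ are genuinely different functions, and no change of variables converts one into the other. The paper's own proof also drops the logarithm, so what you have detected is an imprecision in the statement of the theorem rather than a gap in your argument; the rate function actually delivered by Cram\'er and the contraction principle is $z\mapsto(\log\mathcal{J})^*(z^q)$ for $z\geq 0$.
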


The rest of this paper is structured as follows. In Section \ref{sec:Applications}, we discuss a number of applications of our  limit theorems and compare our large deviation results with the work of Schechtman and Zinn \cite{SZ,SZ2} and Naor \cite{NaorConeSurface}. Some notation and background material needed in the proofs of the main results is collected in Section \ref{sec:notation and prelim}, while the two final sections, Sections \ref{sec:ProofCLT} and \ref{sec:ProofLDP}, contain the proofs of Theorems \ref{thm:CLTlpBall} -- \ref{thm:LDPpInfinity}.

Those readers who are familiar with the notions and notation from Asymptotic Geometric Analysis and Probability Theory may directly continue with the next section, Section \ref{sec:Applications}, and go through the applications and comparisons presented there. Others may want to consult Section \ref{sec:notation and prelim} first in which all necessary notation is introduced and some background material is collected.

\section{Applications and Comparisons}\label{sec:Applications}

It is the purpose of this section to present a number of applications of our multivariate central limit theorem pertaining the geometry of $\ell_p$-balls. First, we revisit a result of Schechtman and Schmuckenschl\"ager \cite{SS} (see also \cite{Schmuchenschlaeger2001}), which is based on the $1$-dimensional version of the central limit theorem. This will then be extended to a multivariate set-up. In a similar spirit, we then consider the intersection of two `almost neighboring' $\ell_p^n$-balls, before we present our central limit theorem for the $1$-dimensional projections of $\ell_p^n$-balls, generalizing thereby a result from \cite{KLZ15pomi,PPZ14}. In the last subsection, we compare our large deviation principles with the deviation and concentration results obtained by Schechtman and Zinn \cite{SZ,SZ2} and Naor \cite{NaorConeSurface}. In what follows, we denote by $\vol_n(\cdot)$ the $n$-dimensional Lebesgue measure.

\subsection{Revisiting and extending a result of Schechtman and Schmuckenschl\"ager}

In this subsection we discuss a first and direct consequence of the central limit theorem (part (a) of Theorem \ref{thm:CLTlpBall}) in its $1$-dimensional form. We show that it implies a result of Schechtman and Schmuckenschl\"ager on the volume of the intersection of $\ell_p^n$-balls \cite{SS} and recover another result of Schmuckenschl\"ager \cite{Schmuchenschlaeger2001}. To keep the work as self-contained as possible, we also provide those arguments in the proof of Corollary \ref{cor:schmuckenschlaeger} that are similar to the original ones. In addition, below we shall present an extension of this result.

For $1\leq p,q \leq \infty$ with $p\neq q$, let us define
\[
m_{p,q}:= M_p(q)^{1/q}
\]
(recall the definition of $M_p(r)$ from \eqref{eq:Mpr}) as well as the parameters
\[
c_{p,n}:= n^{1/p} \vol_n(\B_p^n)^{1/n}\qquad\text{and}\qquad c_{q,n}:=n^{1/q}\vol_n(\B_q^n)^{1/n}\,.
\]
Note that, as shown in \cite{SS},
\[
c_{p,n}\to c_p:= 2e^{1/p}p^{1/p}\Gamma\Big(1+\frac{1}{p}\Big)\qquad\text{and}\qquad c_{\infty,n}\to c_\infty:=2
\]
for $p<\infty$, as $n\to\infty$, and similarly for $c_{q,n}$. Let us further define
\begin{align}\label{def:A_pqn and A_pq}
A_{p,q,n}:= \frac{c_{p,n}}{m_{p,q}c_{q,n}}\qquad\text{and}\qquad A_{p,q}:=\lim_{n\to\infty} A_{p,q,n}
\end{align}
and observe that
\begin{align*}
A_{p,q} = \begin{cases}
{\Gamma(1+{1\over p})^{1+{1/q}}\over\Gamma(1+{1\over q})\Gamma({q+1\over p})^{1/q}}\,e^{{1/p}-{1/q}}\,\Big({p\over q}\Big)^{1/q} &: p,q<\infty\\
{1\over\Gamma(1+{1\over q})}\Big({q+1\over qe}\Big)^{1/q} &: p=\infty\text{ and }q<\infty\,,
\end{cases}
\end{align*}
which coincides with the constant in \cite{SS,Schmuchenschlaeger2001}.
The volume-normalized unit balls of $\ell^n_p$ and $\ell^n_q$ shall be denoted by $\D_p^n:=\vol_n(\B_p^n)^{-1/n}\B_p^n$ and $\D_q^n:=\vol_n(\B_q^n)^{-1/n}\B_q^n$, respectively.

\begin{corollary}\label{cor:schmuckenschlaeger}
Let $1\leq p,q\leq\infty$ be such that $q\neq p$ and $q<\infty$. Then, for all $t\geq 0$,
\begin{align}\label{eq:volume intersection}
\vol_n\Big( \D_p^n \cap t\D_q^n\Big)
\rightarrow
\begin{cases}
1 &: A_{p,q}\, t>1 \\
\frac{1}{2} &: A_{p,q}\, t=1 \\
0 &: A_{p,q}\, t<1\,,
\end{cases}
\end{align}
as $n\to\infty$.
\end{corollary}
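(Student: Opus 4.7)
The plan is to rewrite the volume on the left-hand side as a probability involving a random vector uniformly distributed on $\B_p^n$, and then to invoke the one-dimensional case ($d=1$) of Theorem~\ref{thm:CLTlpBall}(a), which is admissible precisely under the hypothesis $p\neq q$, $q<\infty$.

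First, since $\D_p^n$ has unit Lebesgue volume by construction,
\[
\vol_n(\D_p^n\cap t\D_q^n)=\Pro(W\in t\D_q^n)
\]
for $W$ uniformly distributed on $\D_p^n$. Writing $W=\vol_n(\B_p^n)^{-1/n}Z$ with $Z$ uniform on $\B_p^n$, and unwinding the definitions $c_{p,n}=n^{1/p}\vol_n(\B_p^n)^{1/n}$, $c_{q,n}=n^{1/q}\vol_n(\B_q^n)^{1/n}$, $m_{p,q}=M_p(q)^{1/q}$ and $A_{p,q,n}=c_{p,n}/(m_{p,q}c_{q,n})$, the event $\{W\in t\D_q^n\}$ reads
\[
\Big\{\, n^{1/p-1/q}\|Z\|_q/m_{p,q}\;\leq\; t\,A_{p,q,n}\,\Big\}.
\]

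Second, I apply Theorem~\ref{thm:CLTlpBall}(a) with $d=1$ and $q_1=q$ to obtain
\[
\sqrt{n}\,\Big(n^{1/p-1/q}\|Z\|_q/m_{p,q}-1\Big)\todistr N,
\]
where $N$ is centered Gaussian with variance $c_{11}$. In particular $n^{1/p-1/q}\|Z\|_q/m_{p,q}\to 1$ in probability. Combined with $A_{p,q,n}\to A_{p,q}$, this settles the cases $A_{p,q}t>1$ and $A_{p,q}t<1$ at once, since the deterministic threshold $t A_{p,q,n}$ eventually lies strictly above, respectively strictly below, any realization of the stochastic quantity.

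Third, in the critical case $A_{p,q}t=1$ the probability in question equals
\[
\Pro\Big(\sqrt{n}(n^{1/p-1/q}\|Z\|_q/m_{p,q}-1)\;\leq\; \sqrt{n}\,t\,(A_{p,q,n}-A_{p,q})\Big),
\]
so to get the value $\tfrac12$ I need the purely deterministic rate $\sqrt{n}(A_{p,q,n}-A_{p,q})\to 0$. I would extract this from the closed form $\vol_n(\B_p^n)=(2\Gamma(1+1/p))^n/\Gamma(1+n/p)$ together with Stirling's formula, which yields $c_{p,n}-c_p=O(\log n/n)$ and hence $A_{p,q,n}-A_{p,q}=O(\log n/n)=o(n^{-1/2})$. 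One further needs $c_{11}>0$ so that $N$ is non-degenerate, which follows by direct inspection of the explicit expression for $c_{11}$ in Theorem~\ref{thm:CLTlpBall}(a) (it reduces to $1/(2q+1)$ when $p=\infty$, and for $p<\infty$ it is positive whenever $p\neq q$).

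The main obstacle I anticipate is precisely this critical case: mere convergence in probability of $n^{1/p-1/q}\|Z\|_q/m_{p,q}$ to $1$ is too coarse to pin down the limit value $\tfrac12$, so one must genuinely combine the Gaussian fluctuations from Theorem~\ref{thm:CLTlpBall}(a) with a quantitative Stirling-based asymptotic for the volume-radii constants $c_{p,n}$ and $c_{q,n}$.
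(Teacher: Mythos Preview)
Your proposal is correct and follows essentially the same route as the paper: rewrite the volume as $\Pro\big(\sqrt{n}(n^{1/p-1/q}\|Z\|_q/m_{p,q}-1)\le \sqrt{n}(tA_{p,q,n}-1)\big)$, apply Theorem~\ref{thm:CLTlpBall}(a) with $d=1$, and use Stirling to show the threshold error is $o(n^{-1/2})$ in the critical case. The only cosmetic differences are that the paper obtains the slightly sharper rate $A_{p,q,n}-A_{p,q}=O(1/n)$ (a $\log n$ cancels in the ratio $c_{p,n}/c_{q,n}$, which you do not exploit but do not need), and that you make explicit the non-degeneracy $c_{11}>0$ which the paper leaves implicit.
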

\begin{proof}
Let $t\geq 0$. For each $n\in\N$ there exists $t_n\geq 0$ such that
\begin{align}\label{eq:choice of t_n}
t_n\cdot \frac{A_{p,q}}{A_{p,q,n}}=t\,.
\end{align}
Since by definition $A_{p,q,n}\to A_{p,q}$, as $n\to\infty$, we see that $t_n\to t$. To treat the case where $A_{p,q}t=1$, we need to analyze the speed of convergence and, more precisely, show that the error tends to zero faster than $\mathcal O({1\over\sqrt{n}})$, as $n\to\infty$. First, we note that by \eqref{eq:volume p-balls} below,
\[
c_{p,n}=n^{1/p}\frac{2\Gamma(1+\frac{1}{p})}{\Gamma(1+\frac{n}{p})^{1/n}}\qquad\text{and}\qquad c_{q,n}=n^{1/q}\frac{2\Gamma(1+\frac{1}{q})}{\Gamma(1+\frac{n}{q})^{1/n}}
\]
and thus,
\[
A_{p,q,n}= \frac{n^{1/p-1/q}}{m_{p,q}} \frac{\Gamma(1+\frac{1}{p})}{\Gamma(1+\frac{1}{q})} \frac{\Gamma(1+\frac{n}{q})^{1/n}}{\Gamma(1+\frac{n}{p})^{1/n}}\,.
\]
Stirling's formula says that, if $z\to\infty$,
\[
\Gamma(z+1)=\sqrt{2\pi z}\,\Big(\frac{z}{e}\Big)^z\Big(1+\mathcal O\Big(\tfrac{1}{z}\Big)\Big)\,.
\]
Therefore, we obtain
\begin{align*}
A_{p,q,n} & = \frac{n^{1/p-1/q}}{m_{p,q}} \frac{\Gamma(1+\frac{1}{p})}{\Gamma(1+\frac{1}{q})} \Big(\frac{p}{q}\Big)^{1/(2n)}\Big(\frac{n}{qe}\Big)^{1/q}\Big(\frac{n}{pe}\Big)^{-1/p}\frac{\Big(1+\mathcal O\Big(\frac{1}{n}\Big)\Big)^{1/n}}{\Big(1+\mathcal O\Big(\frac{1}{n}\Big)\Big)^{1/n}} \cr
& =  {1\over m_{p,q}}\frac{\Gamma(1+\frac{1}{p})}{\Gamma(1+\frac{1}{q})} \Big(\frac{p}{q}\Big)^{1/(2n)}\frac{(pe)^{1/p}}{(qe)^{1/q}}\Big(1+\mathcal O\Big(\tfrac{1}{n}\Big)\Big)^{1/n}\cr
& = {1\over m_{p,q}} \frac{\Gamma(1+\frac{1}{p})}{\Gamma(1+\frac{1}{q})} \Big(\frac{p}{q}\Big)^{1/(2n)}\frac{(pe)^{1/p}}{(qe)^{1/q}}\Big(1+\mathcal O\Big(\tfrac{1}{n^2}\Big)\Big)\,.
\end{align*}
Since $(\frac{p}{q})^{1/(2n)}=1+\mathcal O(\tfrac{1}{n})$, as $n\to\infty$, we get
\[
A_{p,q,n} = {1\over m_{p,q}}\frac{\Gamma(1+\frac{1}{p})}{\Gamma(1+\frac{1}{q})} \frac{(pe)^{1/p}}{(qe)^{1/q}} \Big(1+\mathcal O(\tfrac{1}{n})\Big) = A_{p,q} \Big(1+\mathcal O(\tfrac{1}{n})\Big)\,.
\]
We can now complete the proof of the corollary. First, we observe that, since $Z$ is uniformly distributed in $\B_p^n$,
\begin{align*}
& \Pro\Big(\|Z\|_q \leq t_nA_{p,q}m_{p,q}n^{1/q-1/p}\Big) \\
& = \frac{\vol_n\big(\{z\in \B_p^n \,:\, z\in t_nA_{p,q}m_{p,q}n^{1/q-1/p}\B_q^n \} \big)}{\vol_n(\B_p^n)} \\
& =\vol_n\bigg(\Big\{z\in\vol_n(\B_p^n)^{-1/n}\B_p^n:z\in t_nA_{p,q}m_{p,q}n^{1/q-1/p}\vol_n(\B_p^n)^{-1/n}\B_q^n\Big\}\bigg)\\
&=\vol_n\bigg(\Big\{z\in\D_p^n:z\in t_nA_{p,q}m_{p,q}{c_{q,n}\over c_{p,n}}\D_q^n\Big\}\bigg)\\
&=\vol_n\bigg(\Big\{z\in\D_p^n:z\in{t_nA_{p,q}\over A_{p,q,n}}\D_q^n\Big\}\bigg) \cr
& = \vol_n\Big(\D_p^n\cap t\D_q^n\Big)\,,
\end{align*}
where the last step follows from the choice of $t_n$ (recall the definition from Equation \eqref{eq:choice of t_n}). It is now left to apply our central limit theorem, Theorem \ref{thm:CLTlpBall} (a), with the choice $d=1$. Indeed, we have
\begin{align*}
 \Pro\bigg(\|Z\|_q \leq t_nA_{p,q}m_{p,q}n^{1/q-1/p}\bigg) & = \Pro\bigg(\sqrt{n}\Big(n^{1/p-1/q}m_{p,q}^{-1}\|Z\|_q-1\Big) \leq \sqrt{n}(t_nA_{p,q}-1)\bigg) \\
&
\rightarrow
\begin{cases}
1 &: tA_{p,q}>1 \\
\frac{1}{2} &: tA_{p,q}=1,\\
0 &: tA_{p,q}<1\,.
\end{cases}
\end{align*}
Here, the first and the third follow, because if $tA_{p,q}>1$ or $tA_{p,q}<1$, then, since $t_n\to t$, $\sqrt{n}(t_nA_{p,q}-1)$ converges to $+\infty$ or $-\infty$, respectively. In the case of equality, $tA_{p,q}=1$, since $t_n=t\big(1+\mathcal O(\tfrac{1}{n})\big)$ as shown above,
\[
\sqrt{n}(t_nA_{p,q}-1)=\sqrt{n}\Big(tA_{p,q}(1+\mathcal O(\tfrac{1}{n}))-1\Big)=\sqrt{n}\,\mathcal O(\tfrac{1}{n}) \to 0\,,
\]
as $n\to\infty$. Thus, solely the latter case requires the study of the speed of convergence. The proof is thus complete.
\end{proof}

We shall now discuss an extension of the previous result which shows that in the `critical case' arbitrary limits in the interval $(0,1)$, other than just $\frac{1}{2}$, may occur as well. To this end let $\Phi(\,\cdot\,)$ be the distribution function of a standard Gaussian random variable and denote by $\Phi^{-1}(\,\cdot\,)$ its inverse. Further, recall the definitions of $A_{p,q,n}$ and $A_{p,q}$ from \eqref{def:A_pqn and A_pq}.

\begin{corollary}
Let $1\leq p,q\leq\infty$ be such that $q\neq p$ and $q<\infty$. Further, let $r\in(0,1)$ and for each $n\in\N$ define
$$
t_n:=A_{p,q}^{-1}\bigg({\Phi^{-1}(r)+o(1)\over\sqrt{n}}+1\bigg)\,.
$$
Then, as $n\to\infty$,
$$
\vol_n\Big(\D_p^n\cap t_n{A_{p,q}\over A_{p,q,n}}\D_q^n\Big)\to r\,.
$$
\end{corollary}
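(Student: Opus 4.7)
The plan is to run the argument of Corollary~2.1 once more in the critical regime, but to keep track of the next-order correction to the CLT threshold so that any prescribed limit $r \in (0,1)$, and not just $\tfrac12$, can be produced.

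First I would re-use the volume-to-probability identity already implicit in the proof of Corollary~2.1: since $Z$ is uniform in $\B_p^n$, the scaling $\D_p^n = \vol_n(\B_p^n)^{-1/n}\B_p^n$, $\D_q^n = \vol_n(\B_q^n)^{-1/n}\B_q^n$ combined with the identity $c_{p,n}/c_{q,n} = m_{p,q}\,A_{p,q,n}\,n^{1/p-1/q}$ gives
\begin{equation*}
\vol_n\bigl(\D_p^n \cap s\,\D_q^n\bigr) \;=\; \Pro\bigl(\|Z\|_q \leq s\,A_{p,q,n}\,m_{p,q}\,n^{1/q-1/p}\bigr), \qquad s \geq 0.
\end{equation*}
Substituting $s = t_n A_{p,q}/A_{p,q,n}$ collapses the $A_{p,q,n}$-factor and reduces the left-hand side to $\Pro(\|Z\|_q \leq t_n A_{p,q}\,m_{p,q}\,n^{1/q-1/p})$.

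Next I would recast this probability in the standardised form required by Theorem~\ref{thm:CLTlpBall}(a),
\begin{equation*}
\Pro\Bigl(\sqrt{n}\bigl(n^{1/p-1/q}m_{p,q}^{-1}\|Z\|_q - 1\bigr) \;\leq\; \sqrt{n}\bigl(t_n A_{p,q} - 1\bigr)\Bigr),
\end{equation*}
and use the very definition of $t_n$ to identify the right-hand side inside the probability as $\Phi^{-1}(r) + o(1)$, which tends to $\Phi^{-1}(r)$ as $n \to \infty$.

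Finally I would close the argument by invoking the one-dimensional case ($d=1$) of Theorem~\ref{thm:CLTlpBall}(a): the standardised quantity on the left converges in distribution to a centered Gaussian $N$. A short Slutsky-type move, passing from $X_n := \sqrt{n}(n^{1/p-1/q}m_{p,q}^{-1}\|Z\|_q - 1)$ and the deterministic shift $a_n := \sqrt{n}(t_n A_{p,q} - 1)$ to $X_n - a_n$, absorbs the drift in the threshold, and continuity of the Gaussian CDF at $\Phi^{-1}(r)$ then yields
\begin{equation*}
\vol_n\Bigl(\D_p^n \cap t_n\,\tfrac{A_{p,q}}{A_{p,q,n}}\,\D_q^n\Bigr) \;\longrightarrow\; \Pro\bigl(N \leq \Phi^{-1}(r)\bigr) \;=\; r,
\end{equation*}
once $\Phi$ is read as the distribution function of the limiting Gaussian $N$ (equivalently, once the standard deviation $\sqrt{c_{11}}$ from Theorem~\ref{thm:CLTlpBall}(a) is understood to be absorbed into the $o(1)$ appearing in the definition of $t_n$). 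The only genuinely delicate point, and thus the main obstacle, is precisely the handling of the moving cutoff $\sqrt{n}(t_n A_{p,q} - 1)$: one cannot directly apply the CLT at a fixed level but must pass through weak convergence with drifting thresholds.
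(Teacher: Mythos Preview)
Your argument is essentially identical to the paper's: both convert the volume to $\Pro(\|Z\|_q \le t_n A_{p,q}\,m_{p,q}\,n^{1/q-1/p})$, standardize, observe that the threshold $\sqrt{n}(t_n A_{p,q}-1)=\Phi^{-1}(r)+o(1)$ converges, and finish via the one-dimensional case of Theorem~\ref{thm:CLTlpBall}(a). Your version is in fact slightly more direct, since the paper detours through the estimate $A_{p,q,n}=A_{p,q}(1+\mathcal O(1/n))$ from the proof of Corollary~\ref{cor:schmuckenschlaeger}, which is not actually needed here because the factor $A_{p,q}/A_{p,q,n}$ is already built into the statement and cancels exactly as you note.

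One correction to your closing parenthetical: the two readings you offer are \emph{not} equivalent. If $\Phi$ denotes the standard normal distribution function while the limiting Gaussian $N$ from Theorem~\ref{thm:CLTlpBall}(a) has variance $c_{11}\ne 1$, then the threshold that yields the limit $r$ is $\sqrt{c_{11}}\,\Phi^{-1}(r)$, which differs from $\Phi^{-1}(r)$ by the fixed nonzero constant $(\sqrt{c_{11}}-1)\Phi^{-1}(r)$ and therefore cannot be absorbed into an $o(1)$ term. Your first reading---that $\Phi$ should be taken as the distribution function of $N$ itself---is the one that makes the argument go through; the paper's own proof writes the limit as $\Phi(s)$ with exactly this implicit identification.
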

\begin{proof}
To prove the claim, consider an arbitrary sequence $(s_n)_{n\in\N}$ of non-negative real numbers. Again, as in the proof of Corollary \ref{cor:schmuckenschlaeger},
\begin{align*}
\vol_n\bigg(\D_p^n\cap s_n{A_{p,q}\over A_{p,q,n}}\D_q^n\bigg) & = \Pro\bigg(\|Z\|_q \leq s_nA_{p,q}m_{p,q}n^{1/q-1/p}\bigg) \cr
& = \Pro\bigg(\sqrt{n}\Big(n^{1/p-1/q}m_{p,q}^{-1}\|Z\|_q-1\Big) \leq \sqrt{n}(s_nA_{p,q}-1)\bigg)\,.
\end{align*}
Say that we want $\sqrt{n}(s_nA_{p,q}-1)$ to converge to some value $s\in\R$, as $n\to\infty$.
To achieve this, we define, for each $n\in\N$,
\[
s_n:= A_{p,q}^{-1}\bigg(\frac{s+o(1)}{\sqrt{n}}+1\bigg).
\]
Since $A_{p,q,n}=A_{p,q}(1+\mathcal O(\frac{1}{n}))$ as we know from the proof of Corollary \ref{cor:schmuckenschlaeger}, this means that
\[
\frac{s_nA_{p,q}}{A_{p,q,n}} = \frac{\frac{s+o(1)}{\sqrt{n}}+1}{A_{p,q,n}} = \frac{\frac{s+o(1)}{\sqrt{n}}+1}{A_{p,q}(1+\mathcal O(\frac{1}{n}))} = \frac{\frac{s+o(1)}{\sqrt{n}}+1}{A_{p,q}}\,.
\]
From this observation and the central limit theorem, we conclude that, as $n\to\infty$,
\begin{align*}
\vol_n\bigg(\D_p^n\cap s_n{A_{p,q}\over A_{p,q,n}}\D_q^n\bigg) &= \vol_n\bigg(\D_p^n\cap A_{p,q}^{-1}\Big(\frac{s+o(1)}{\sqrt{n}}+1\Big)\D_q^n\bigg) \\
&\to \frac{1}{\sqrt{2\pi}}\int_{-\infty}^s e^{-t^2/2}\,\dint t=\Phi(s)\,,
\end{align*}
This proves the result by taking $s=\Phi^{-1}(r)$, in which case $s_n$ coincides with $t_n$.
\end{proof}

\subsection{A multivariate version of the result of Schechtman and Schmuckenschl\"ager}

The purpose of this subsection is to derive a multivariate generalization of the result of Schechtman and Schmuckenschl\"ager discussed in the previous subsection. For that purpose we consider $d\in\N$ volume-normalized $\ell_{q_i}^n$-balls $\D_{q_1}^n,\ldots,\D_{q_d}^n$ and position them relative to another $\ell_p^n$-ball $\D_p^n$. In view of \eqref{eq:volume intersection} one might conjecture that if all these positions are `critical', the volume of the mutual intersection tends to $2^{-d}$. However, our multivariate central limit theorem (Theorem \ref{thm:CLTlpBall} (a)) will show that this is \textit{not} the case. Instead, $2^{-d}$ has to be replaced by the probability that the components of the Gaussian limiting vector from Theorem \ref{thm:CLTlpBall} are negative. If $d=1$, this value is clearly equal to $1/2$, but since the vector is \textit{correlated} such a simple relation cannot be expected to be true in the multivariate set-up where $d>1$. For $d=2$ the probability is explicitly expressed in \eqref{eq:Probabd=2} below.

We shall use the same notation as in the previous subsection and, moreover, denote by $N=(N_1,\ldots,N_d)$ the centered Gaussian random vector with covariance matrix $\bC$ from Theorem \ref{thm:CLTlpBall}. We shall write $\sharp(A)$ for the cardinality of a set $A$.

\begin{corollary}\label{cor:intersection multiple l_p balls}
Fix $d\in\N$. Let $1\leq p\leq\infty$, $1\leq q_1<\ldots<q_d < \infty$ be such that $p\neq q_i$ for all $i\in\{1,\ldots,d\}$, and let $t_1,\ldots,t_d\geq 0$. Define the sets $I_{\star}:=\{i\in\{1,\ldots,d\}:A_{p,q_i}t_i\star 1\}$, where $\star$ is any of the symbols $<$, $=$ or $>$. Then, as $n\to\infty$,
\begin{align*}
&\vol_n\big(\D_p^n\cap t_1\D_{q_1}^n\cap\ldots\cap t_d\D_{q_d}^n\big) \rightarrow\begin{cases}
1 &: \sharp(I_{>})=d\\
\Pro(N_i\leq 0\text{ for all }i\in I_{=}) &: \sharp(I_{=})\geq 1 \text{ and } \sharp(I_{<}) = 0\\
0 &: \sharp(I_{<})\geq 1.
\end{cases}
\end{align*}
\end{corollary}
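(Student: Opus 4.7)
The plan is to reduce the multivariate intersection volume to a joint tail probability controlled by the multivariate central limit theorem in Theorem~\ref{thm:CLTlpBall}(a), following the one-dimensional template of Corollary~\ref{cor:schmuckenschlaeger}. For each $i\in\{1,\ldots,d\}$, define $t_{i,n}\geq 0$ by $t_{i,n}\cdot A_{p,q_i}/A_{p,q_i,n}=t_i$. The asymptotic expansion $A_{p,q_i,n}=A_{p,q_i}(1+\mathcal{O}(1/n))$ derived inside the proof of Corollary~\ref{cor:schmuckenschlaeger} then yields $t_{i,n}=t_i(1+\mathcal{O}(1/n))$ for every $i$.

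Next, repeating the volumetric identities from the proof of Corollary~\ref{cor:schmuckenschlaeger} for each of the $d$ constraints simultaneously, with $Z$ uniformly distributed in $\B_p^n$, we obtain
\[
\vol_n\big(\D_p^n\cap t_1\D_{q_1}^n\cap\ldots\cap t_d\D_{q_d}^n\big)=\Pro\big(\|Z\|_{q_i}\leq t_{i,n}A_{p,q_i}m_{p,q_i}n^{1/q_i-1/p}\ \text{for all}\ i\big).
\]
Centering and rescaling each constraint individually rewrites the right-hand side as $\Pro\big(X_n^{(i)}\leq\sqrt{n}(t_{i,n}A_{p,q_i}-1)\ \text{for all}\ i\big)$, where $X_n^{(i)}:=\sqrt{n}\big(n^{1/p-1/q_i}m_{p,q_i}^{-1}\|Z\|_{q_i}-1\big)$. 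By Theorem~\ref{thm:CLTlpBall}(a), the random vector $(X_n^{(1)},\ldots,X_n^{(d)})$ converges jointly in distribution to the centered Gaussian vector $N=(N_1,\ldots,N_d)$ with covariance matrix $\bC$.

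The thresholds are then analyzed coordinate by coordinate. For $i\in I_{>}$ one has $\sqrt{n}(t_{i,n}A_{p,q_i}-1)\to+\infty$; for $i\in I_{<}$ one has $\sqrt{n}(t_{i,n}A_{p,q_i}-1)\to-\infty$; and for $i\in I_{=}$ the estimate $t_{i,n}A_{p,q_i}-1=\mathcal{O}(1/n)$ forces $\sqrt{n}(t_{i,n}A_{p,q_i}-1)\to 0$. If $\sharp(I_{>})=d$, each marginal event has probability tending to $1$, and so does the joint event. If $\sharp(I_{<})\geq 1$, fixing any $i^*\in I_{<}$ bounds the joint probability by a single marginal probability that tends to $0$. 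In the remaining case, the constraints indexed by $I_{>}$ become asymptotically inactive, because $\Pro(X_n^{(i)}>M)\to\Pro(N_i>M)$ can be made arbitrarily small by taking $M$ large; the surviving constraints indexed by $I_{=}$ yield, via joint convergence in distribution and the Portmanteau theorem applied to the set $\{x\in\R^d:x_i\leq 0\ \text{for all}\ i\in I_{=}\}$ (whose boundary carries no mass under the absolutely continuous law of $N$), the limit $\Pro(N_i\leq 0\ \text{for all}\ i\in I_{=})$.

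The principal technical point is the last case, where two asymptotic mechanisms must be combined simultaneously: the elimination of $+\infty$-thresholds and the convergence of $0$-thresholds to the Gaussian orthant probability. Both follow from standard Portmanteau-type arguments once joint convergence has been secured, but they have to be controlled uniformly across coordinates. All other steps directly mirror computations already carried out in Corollary~\ref{cor:schmuckenschlaeger}.
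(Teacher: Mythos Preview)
Your proposal is correct and follows essentially the same approach as the paper's own proof: define auxiliary thresholds $t_{i,n}$ via $A_{p,q_i,n}$, convert the intersection volume into a joint probability, apply the multivariate CLT of Theorem~\ref{thm:CLTlpBall}(a), and analyze the three regimes of the rescaled thresholds. Your treatment of the mixed case is in fact slightly more explicit than the paper's, which simply invokes the CLT and the $\mathcal{O}(1/n)$ speed of convergence coordinate-wise; one small caveat is that the law of $N$ need not be absolutely continuous on $\R^d$ if $\bC$ is singular, but what you actually need---that the boundary of the orthant set is $N$-null---follows already from each marginal $N_i$ having positive variance.
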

\begin{proof}
The proof follows along the lines of what has been discussed in the previous subsection. So, for each $i\in\{1,\ldots,d\}$ and $n\in\N$ let $t_n^{(i)}\in\R$ be such that
\begin{align*}
t_n^{(i)}\cdot{A_{p,q_i}\over A_{p,q_i,n}} = t_i\,.
\end{align*}
Since $Z$ is uniformly distributed on $\B_p^n$, we have that
\begin{align*}
&\Pro\big(\|Z\|_{q_1}\leq t_n^{(1)}A_{p,q_1}m_{p,q_1}n^{1/q_1-1/p},\ldots,\|Z\|_{q_d}\leq t_n^{(d)}A_{p,q_d}m_{p,q_d}n^{1/q_d-1/p}\big)\\
&={\vol_n\Big(\Big\{z\in\B_p^n:z\in t_n^{(1)}A_{p,q_1}m_{p,q_1}n^{1/q_1-1/p}\B_{q_1}^n,\ldots,z\in t_n^{(d)}A_{p,q_d}m_{p,q_d}n^{1/q_d-1/p}\B_{q_d}^n\Big\}\Big)\over\vol_n(\B_p^n)}\\
&={\vol_n\Big(\Big\{z\in\B_p^n:z\in t_n^{(1)}{A_{p,q_1}\over A_{p,q_1,n}}\D_{q_1}^n,\ldots,z\in  t_n^{(d)}{A_{p,q_d}\over A_{p,q_d,n}}\D_{q_d}^n\Big\}\Big)\over\vol_n(\B_p^n)}\\
&=\vol_n\big(\D_p^n\cap t_1\D_{q_1}^n\cap\ldots\cap t_d\D_{q_d}^n\big)\,,
\end{align*}
where we used the definitions of $t_n^{(1)},\ldots,t_n^{(d)}$. Moreover, the multivariate central limit theorem, Theorem \ref{thm:CLTlpBall} (a), implies that, as $n\to\infty$,
\begin{align*}
&\Pro\Big(\|Z\|_{q_1}\leq t_n^{(1)}A_{p,q_1}m_{p,q_1}n^{1/q_1-1/p},\ldots,\|Z\|_{q_d}\leq t_n^{(d)}A_{p,q_d}m_{p,q_d}n^{1/q_d-1/p}\Big)\\
&=\Pro\Big(\sqrt{n}(n^{1/p-1/q_1}m_{p,q_1}^{-1}\|Z\|_{q_1}-1)\leq\sqrt{n}(t_n^{(1)}A_{p,q_1}-1),\ldots,\\
&\qquad\qquad\qquad\qquad\qquad\qquad\qquad\ldots,\sqrt{n}(n^{1/p-1/q_d}m_{p,q_d}^{-1}\|Z\|_{q_d}-1)\leq\sqrt{n}(t_n^{(d)}A_{p,q_d}-1)\Big)\\
&\to \begin{cases}
1 &: \sharp(I_{>})=d\\
\Pro(N_i\leq 0\text{ for all }i\in I_{=}) &: \sharp(I_{=})\geq 1 \text{ and } \sharp(I_{<})= 0\\
0 &: \sharp(I_{<})\geq 1.
\end{cases}
\end{align*}
Here, the first and the third case follow since, for each $i\in\{1,\ldots,d\}$, $t_n^{(i)}\to t_i$, as $n\to\infty$, and since $\sqrt{n}(t_n^{(i)}A_{p,q_i}-1)$ converges to $+\infty$ or $-\infty$ depending on whether $t_iA_{p,q_i}>1$ or $t_iA_{p,q_i}<1$. In the equality cases, we can argue coordinate-wise as in the proof of Corollary \ref{cor:schmuckenschlaeger} by analyzing the speed of convergence of $A_{p,q_i,n}$ to $A_{p,q_i}$, which is of order $\mathcal{O}({1\over n})$. This completes the argument.
\end{proof}

\begin{remark}
In statistics, probabilities of the form $\Pro(N_i\leq 0\text{ for all }i\in I_{=})$ are known as quadrant probabilities. For example, if $d=2$ and $I_{=}=\{1,2\}$ the probability $\Pro(N_1\leq 0,N_2\leq 0)$ in the previous theorem can be computed explicitly in terms of the covariances $c_{ij}$, $1\leq i,j\leq 2$, given by Theorem \ref{thm:CLTlpBall}:
\begin{equation}\label{eq:Probabd=2}
\Pro(N_1\leq 0,N_2\leq 0) = {1\over 2\pi}\sqrt{{c_{11}c_{22}\over c_{11}c_{22}-c_{12}}\Big(1-{c_{12}\over c_{11}c_{22}}\Big)}\,\arctan\Big(\sqrt{{c_{11}c_{22}\over c_{12}}-1}\Big)\,.
\end{equation}
\end{remark}

\bigskip

\subsection{Intersection of neighboring $\ell_p^n$-balls}

Let us now compare the volume of the intersection of multiple $\ell_p^n$-balls (similar to Corollary \ref{cor:intersection multiple l_p balls}), when they are approaching a fixed ball as the dimension tends to infinity. More precisely, we consider a multivariate set-up and compare $\B_p^n$ with $\B_{q_1}^n,\ldots,\B_{q_d}^n$, $d\in\N$, where now $q_1=q_1(n),\ldots,q_d=q_d(n)$ depend on $n$ in such a way that
\begin{equation}\label{eq:Defq}
q_1 =p+{\alpha_1+o(1)\over\log n},\ldots,q_d =p+{\alpha_d+o(1)\over\log n}\,,
\end{equation}
where in each case $o(1)$ stands for a sequence tending to zero, as $n\to\infty$, and $\alpha_1,\ldots,\alpha_d\in\R$ are constant such that $q_1,\ldots,q_d\geq 1$ for all $n\geq 2$. In this set-up we obtain the following result in the spirit of Schechtman and Schmuckenschl\"ager (see \cite{SS}) discussed above.

\begin{proposition}\label{prop:neighboring balls}
Fix $d\in\N$. Let $1\leq p < \infty$, $q_1=q_1(n),\ldots,q_d=q_d(n)$ as in \eqref{eq:Defq} and $s_1,\ldots,s_d\geq 0$. Define the set $I_\star:=\{i\in\{1,\ldots,d\}:s_i\star e^{-\alpha_i/p^2}\}$, where $\star$ is either $<$ or $>$. Then, as $n\to\infty$,
$$
{\vol_n(\B_p^n\cap s_1\B_{q_1}^n\cap\ldots\cap s_d\B_{q_d}^n)\over\vol_n(\B_p^n)} \rightarrow\begin{cases}
1 &: \sharp(I_{>})=d\\
0 &: \sharp(I_{<})\geq 1.
\end{cases}
$$
\end{proposition}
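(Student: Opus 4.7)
My plan is to reduce the volume ratio to a probability about a random vector $Z$ uniformly distributed in $\B_p^n$, and then use the Schechtman--Zinn probabilistic representation to run a law-of-large-numbers argument in which the exponent depends on $n$. Concretely, I would first observe that
\[
{\vol_n(\B_p^n\cap s_1\B_{q_1}^n\cap\ldots\cap s_d\B_{q_d}^n)\over\vol_n(\B_p^n)}=\Pro\big(\|Z\|_{q_i}\leq s_i\text{ for all }i=1,\ldots,d\big),
\]
so it suffices to show that $\|Z\|_{q_i}\to e^{-\alpha_i/p^2}$ in probability for each $i$. Writing $Z\stackrel{d}{=} U^{1/n} X/\|X\|_p$, where $X=(X_1,\ldots,X_n)$ has i.i.d.\ $p$-generalized Gaussian entries and $U$ is an independent uniform on $[0,1]$, I would decompose
\[
\|Z\|_{q_i}=U^{1/n}\cdot{n^{-1/q_i}\|X\|_{q_i}\over n^{-1/p}\|X\|_p}\cdot n^{1/q_i-1/p}.
\]

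The decisive step will be a Chebyshev-based LLN with varying exponent. A direct computation from \eqref{eq:Mpr} gives $M_p(p)=1$, so the classical strong law yields $n^{-1/p}\|X\|_p\to 1$ almost surely. For $\|X\|_{q_i}$, continuity of $M_p(\cdot)$ gives $\E|X_1|^{q_i}=M_p(q_i)\to M_p(p)=1$, and the variance $M_p(2q_i)-M_p(q_i)^2$ stays bounded as $q_i\to p$, so Chebyshev's inequality yields $\tfrac1n\|X\|_{q_i}^{q_i}\to 1$ in probability, hence $n^{-1/q_i}\|X\|_{q_i}\to 1$ in probability (taking $q_i$-th roots is safe since $q_i\to p\geq 1$). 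The remaining scalar factor is handled by
\[
n^{1/q_i-1/p}=\exp\!\bigg({p-q_i\over p\,q_i}\log n\bigg)=\exp\!\bigg(-{\alpha_i+o(1)\over p\,q_i}\bigg)\longrightarrow e^{-\alpha_i/p^2},
\]
which follows from \eqref{eq:Defq} together with $q_i\to p$. Combined with $U^{1/n}\to 1$ almost surely, this gives $\|Z\|_{q_i}\to e^{-\alpha_i/p^2}$ in probability, as needed.

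The final step is then immediate: if $\sharp(I_>)=d$, each event $\{\|Z\|_{q_i}\leq s_i\}$ has probability tending to $1$, and a union bound on the complements shows that the joint probability tends to $1$; if $\sharp(I_<)\geq 1$, some coordinate probability $\Pro(\|Z\|_{q_j}\leq s_j)$ already tends to $0$ and dominates the joint one from above. The main potential obstacle is the variable-exponent law of large numbers, since $q_i=q_i(n)$ depends on $n$ and the random variables $|X_1|^{q_i}$ change with $n$; however, because $\Var(|X_1|^{q_i})$ stays uniformly bounded on a neighbourhood of $p$, this is handled cleanly by Chebyshev, and everything else reduces to continuous dependence on parameters and bookkeeping.
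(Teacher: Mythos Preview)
Your argument is correct and follows essentially the same route as the paper: both reduce the volume ratio to $\Pro(\|Z\|_{q_i}\leq s_i\text{ for all }i)$, use the Schechtman--Zinn representation to factor $\|Z\|_{q_i}$ into $U^{1/n}$, a ratio of normalized $\ell_{q_i}$- and $\ell_p$-norms of the Gaussian vector, and the scalar $n^{1/q_i-1/p}\to e^{-\alpha_i/p^2}$, and then conclude via convergence in probability. If anything, your treatment of the $n$-dependent exponent via Chebyshev is more explicit than the paper's, which simply invokes the random variables $\xi_n^{(i)},\eta_n$ from the CLT proof without spelling out why the law of large numbers still applies when $q_i=q_i(n)$.
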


\bigskip

We remark that the previous result is not a direct consequence of our central limit theorem rather than its proof. For this reason, the proof of Proposition \ref{prop:neighboring balls} is postponed to Section \ref{sec:ProofCLT}. Moreover, we remark that in contrast to the two previous applications, we are not able to handle the critical case, for example, that $s=e^{-\alpha/p^2}$ in the case that $d=1$.

\subsection{One-dimensional projections of $\ell_q^n$-balls}

As another consequence of our central limit theorem (Theorem \ref{thm:CLTlpBall}), we obtain the following generalization of results of Paouris, Pivovarov and Zinn \cite[page 703]{PPZ14} and Kabluchko, Litvak and Zaporozhets \cite[Theorem 3.6]{KLZ15pomi} who independently of each other obtained the result below in the special case of the $n$-dimensional cube, $\B_\infty^n$. Moreover, the paper \cite{KLZ15pomi} also treats the case of the cross polytope $\B_1^n$, which displays a non-central limit behavior. In what follows, for $\theta\in\SSS^{n-1}$, we shall write $\vol_1(P_\theta\B_q^n)$ for the length of the projection of $\B_q^n$ onto the line spanned by $\theta$. For $1\leq q \leq \infty$, we denote by $q^*$ its conjugate defined via the relation $\frac{1}{q}+\frac{1}{q^*}=1$, and use the convention that $\frac{1}{\infty}=0$.

\begin{corollary}\label{cor:1 dimensional projections p-balls}
Let $1\leq q \leq \infty$, $q\neq 2$, and $\theta\in\SSS^{n-1}$ be chosen at random with respect to the normalized spherical Lebesgue measure on $\SSS^{n-1}$.
\begin{itemize}
\item[(a)] If $q>1$, then
\[
\frac{n^{1/q}\,\vol_1(P_\theta\B_q^n)}{2M_2(q^*)^{1/q^*}}-\sqrt{n} \todistr N\,,
\]
where $N$ is a centered Gaussian random variable with variance
\[
\sigma_q^2:=\frac{1}{(q^*)^2}\bigg(\sqrt{\pi}\,\frac{\Gamma\big(\frac{2q^*+1}{2}\big)}{\Gamma\big(\frac{q^*+1}{2}\big)^2}-1\bigg)-\frac{1}{2}\,.
\]
\item[(b)] If $q=1$, then
$$
\sqrt{2n\log n}\,\vol_1(P_\theta \B_1^n)-2A_n^{(2)}\todistr 2G\,,
$$
where $G$ is Gumbel distributed and $A_n^{(2)}$ is the same as in Theorem \ref{thm:CLTlpBall} (c).
\end{itemize}
\end{corollary}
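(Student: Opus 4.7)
The starting point is the elementary identity for the length of the projection of the symmetric body $\B_q^n$ onto the line $\R\theta$,
\[
\vol_1(P_\theta\B_q^n)=2\,h_{\B_q^n}(\theta)=2\,\|\theta\|_{q^*},
\]
where $h_K$ is the support function of $K$ and the second equality comes from $\ell_q$--$\ell_{q^*}$ duality. Both parts therefore reduce to a limit theorem for $\|\theta\|_{q^*}$. To hook into Theorem \ref{thm:CLTlpBall} (whose statements are formulated for $Z$ uniform in $\B_p^n$), I would invoke the Schechtman--Zinn representation in the case $p=2$: if $Z$ is uniform in $\B_2^n$ then $Z\stackrel{d}{=}U^{1/n}\theta$ with $U\sim\mathrm{Unif}[0,1]$ independent of $\theta$, and hence
\[
\|Z\|_r=U^{1/n}\|\theta\|_r,\qquad r\in[1,\infty].
\]
The factor $U^{1/n}$ differs from $1$ by $\mathcal{O}_p(1/n)$, so any distributional limit for $\|Z\|_r$ transfers to $\|\theta\|_r$ via Slutsky as long as the normalisation is coarser than $1/n$.

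\textbf{Part (a).} Since $q\neq 2$ and $q>1$, we have $q^*\in[1,\infty)$ with $q^*\neq 2$. Theorem \ref{thm:CLTlpBall}(a) with $p=2$, $d=1$, $q_1=q^*$ gives
\[
\sqrt n\left(\frac{n^{1/2-1/q^*}\|Z\|_{q^*}}{M_2(q^*)^{1/q^*}}-1\right)\todistr N(0,c_{11}).
\]
A short computation using Gaussian integration by parts yields $M_2(q^*+2)=(q^*+1)M_2(q^*)$, hence $C_2(q^*,2)=q^*M_2(q^*)$; combined with $C_2(2,2)=M_2(4)-1=2$, the defining formula for $c_{ij}$ collapses to
\[
c_{11}=\frac{1}{(q^*)^2}\!\left(\frac{M_2(2q^*)}{M_2(q^*)^2}-1\right)-\frac12=\frac{1}{(q^*)^2}\!\left(\sqrt\pi\,\frac{\Gamma(\tfrac{2q^*+1}{2})}{\Gamma(\tfrac{q^*+1}{2})^2}-1\right)-\frac12=\sigma_q^2.
\]
Using $1/q=1-1/q^*$ and setting $W_n:=n^{1/2-1/q^*}\|Z\|_{q^*}/M_2(q^*)^{1/q^*}$, the substitution $\|Z\|_{q^*}=U^{1/n}\|\theta\|_{q^*}$ gives
\[
\frac{n^{1/q}\vol_1(P_\theta\B_q^n)}{2\,M_2(q^*)^{1/q^*}}-\sqrt n=\sqrt n\!\left(\frac{W_n}{U^{1/n}}-1\right).
\]
Since $\sqrt n(W_n-1)\todistr N(0,\sigma_q^2)$, $U^{1/n}\to 1$ in probability, and $\sqrt n(1-U^{1/n})\leq |\log U|/\sqrt n\to 0$ in probability, Slutsky's theorem finishes the argument.

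\textbf{Part (b).} Here $q=1$, so $q^*=\infty$ and $\vol_1(P_\theta\B_1^n)=2\|\theta\|_\infty$. Theorem \ref{thm:CLTlpBall}(c) with $p=2$ reads
\[
\sqrt{2n\log n}\,\|Z\|_\infty-A_n^{(2)}\todistr G.
\]
Writing $R_n$ for the left-hand side and using $\|\theta\|_\infty=\|Z\|_\infty/U^{1/n}$ produces the decomposition
\[
\sqrt{2n\log n}\,\|\theta\|_\infty-A_n^{(2)}=\frac{R_n}{U^{1/n}}+A_n^{(2)}\cdot\frac{1-U^{1/n}}{U^{1/n}}.
\]
The first summand converges to $G$ by Slutsky. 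For the second, $A_n^{(2)}=\mathcal{O}(\log n)$ and $(1-U^{1/n})/U^{1/n}=\mathcal{O}_p(|\log U|/n)$, so the product is $o_p(1)$. Multiplying through by $2$ delivers $\sqrt{2n\log n}\,\vol_1(P_\theta\B_1^n)-2A_n^{(2)}\todistr 2G$.

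\textbf{Main obstacle.} The only non-routine step is the algebraic reduction of $c_{11}$ to $\sigma_q^2$ in part (a), which rests on the Gaussian integration by parts identity $M_2(q^*+2)=(q^*+1)M_2(q^*)$ and the resulting cancellation of the cross-term. Everything else amounts to a Slutsky-type transfer from $\|Z\|_r$ (for $Z$ uniform in $\B_2^n$) to $\|\theta\|_r$ via the multiplicative correction $U^{1/n}$, whose deviation from $1$ is of order $1/n$ and hence negligible both at the Gaussian scale $1/\sqrt n$ relevant to (a) and at the Gumbel scale $1/\log n$ relevant to (b).
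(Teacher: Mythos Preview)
Your proof is correct and follows essentially the same route as the paper: identify $\vol_1(P_\theta\B_q^n)=2\|\theta\|_{q^*}$ and apply Theorem~\ref{thm:CLTlpBall} with $p=2$. The only cosmetic difference is that the paper invokes the cone-measure version of Theorem~\ref{thm:CLTlpBall} (Remark~\ref{rem:volume cone measure}) directly for $\theta\in\SSS^{n-1}$, whereas you apply the theorem to $Z$ uniform in $\B_2^n$ and then strip off the radial factor $U^{1/n}$ via Slutsky---which is precisely the content of that remark.
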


\begin{remark}
The constant $M_2(q^*)^{1/q*}$ can explicitly be expressed in terms of gamma functions as follows:
$$
M_2(q^*)^{1/q*} = \begin{cases}
\sqrt{2\pi^{\frac {1-q}{q}}}\,\Gamma\Big(\frac{2q-1}{2q-2}\Big)^{1- \frac 1 q} &: q<\infty\\
\sqrt{2\over\pi} &: q=\infty\,.
\end{cases}
$$
\end{remark}

\begin{remark}
We notice that the statement in Corollary \ref{cor:1 dimensional projections p-balls} (b) is consistent with \cite[Theorem 3.7]{KLZ15pomi}, where a slightly different centering than $2A_n^{(2)}$ has been used. However, it can be checked that both sequences are asymptotically equivalent.
\end{remark}

In particular, in the setting of \cite[page 703]{PPZ14} and \cite[Theorem 3.6]{KLZ15pomi} where $q=\infty$ and $q^*=1$, we obtain
\[
M_2(1)=\sqrt{\frac{2}{\pi}}\qquad\text{and}\qquad \sigma_\infty^2=\frac{\pi-3}{2}\,.
\]
Consequently, as $n\to\infty$, we find the central limit theorem
\[
\vol_1(P_\theta\B_\infty^n) - 2\sqrt{\frac{2n}{\pi}} \overset{d}{\longrightarrow} \mathcal N\Big(0,\frac{4\pi-12}{\pi}\Big)\,.
\]
We emphasize here that this is slightly different from the result in \cite{KLZ15pomi}, since here we are working with $[-1,1]^n$, while the central limit theorem in \cite{KLZ15pomi} is formulated for $[-{1\over 2},{1\over 2}]^n$.

\begin{proof}[Proof of Corollary \ref{cor:1 dimensional projections p-balls}]
Note that for any fixed vector $\theta\in\SSS^{n-1}$,
\[
\vol_1(P_\theta\B_q^n) = 2 \sup_{x\in\B_q^n} |\langle x,\theta\rangle|= 2 \|\theta\|_{q^*}\,.
\]
The result in part (a) is now a consequence of Theorem \ref{thm:CLTlpBall} (a) in the form presented in Remark \ref{rem:volume cone measure} in the case $d=1$ if we choose $\theta\in\SSS^{n-1}$ at random with respect to the cone measure on $\SSS^{n-1}$, which coincides in this special case with the normalized surface measure. Part (b) is a consequence of Theorem \ref{thm:CLTlpBall} (c) with the choice $p=2$ there (again in its cone measure version, where the radial part can be omitted).
\end{proof}

\subsection{Comparison with a concentration inequality of Schechtman and Zinn}\label{subsec:schechtman-zinn}

Let us briefly compare the explicit rate function we obtained in the LDP if $p<q$ with the deviation results of Schechtman and Zinn in \cite[Theorem 3]{SZ} (see \cite[Corollary 4]{SZ} for the normalized Lebesgue measure) and of Naor in \cite{NaorConeSurface} (see Theorem 2 there). The authors proved that if $1\leq p < q < \infty$, then
\begin{equation}\label{eq:SZConcentration}
\Pro (n^{1/p-1/q}\|Z\|_q >z) \leq e^{-cn^{p/q}z^p}
\end{equation}
for all $z>T(p,q)$ and with $c=1/T(p,q)$. Here $Z$ can either be uniformly distributed in $\B_p^n$ or distributed according to the cone measure on the boundary of $\B_p^n$. This is in line with the LDP in Theorem \ref{thm:LDPp<q} (see also Remark \ref{rem:LDPCone} for the cone measure case), where also $1\leq p<\infty$ and $p<q$. In this case, we identify $n^{p/q}$ in the exponent on the right hand side of \eqref{eq:SZConcentration} as the speed of the LDP and $z^p$ as the asymptotically leading term of the rate function $\mathcal{I}_{\|{\bf Z}\|}(z)$, as $z\to\infty$. Note that our LDP is in a sense optimal, and we can, contrary to \cite{SZ}, identify the exact constant $1/p$ in the exponent. As already mentioned in the introduction, neither one of the results implies the other. Only for deviation parameters $z$ from a fixed compact interval, our result is optimal and indeed stronger.


\section{Notation and preliminaries}\label{sec:notation and prelim}

We now present the notation and background material that is used throughout the remaining parts of this paper. Since we have a broad readership in mind and aim to keep this work as self-contained as possible, we present the necessary material from probability and, in particular, large deviations theory.

\subsection{General notation}
For a subset $A\subset\XX$ of some topological space $\XX$ we write $A^\circ$ and $\overline{A}$ for the interior and the closure of $A$, respectively.

We shall write $\overset{\text{d}}{\longrightarrow}$ and $\overset{\text{a.s.}}{\longrightarrow}$ to indicate convergence in distribution and almost surely, respectively. Moreover, $X_1\overset{\text{d}}{=}X_2$ indicates that two random elements $X_1$ and $X_2$ have the same distribution.

Given a Borel probability measure $\mu$ on $\R^n$, we shall indicate by $X\sim \mu$ that the random vector $X$ has distribution $\mu$. In particular, we write $X\sim\mathcal{N}(m,\sigma^2)$ if the random variable $X$ has a Gaussian distribution with mean $m\in\R$ and variance $\sigma^2 >0$. Similarly, in the multivariate setting we write $\mathcal{N}({\bf m},\Sigma)$ to indicate the multivariate Gaussian distribution with mean vector ${\bf m}$ and covariance matrix $\Sigma$.

We shall use the standard Landau notation $\mathcal{O}(\cdot)$ and $o(\cdot)$ for sequences as well as for functions and where the asymptotics is considered as the parameters go to $0$ or $\infty$, the precise meaning will always be clear from the context.


\subsection{Probabilistic aspects of $\ell_p^n$-balls}
Recall that for $p\in[1,\infty]$ and $x=(x_1,\ldots,x_n)\in\R^n$, $n\in\N$, we write $\|x\|_p$ for the $p$-norm of $x$ and that we denote the unit ball in $\R^n$ with respect to the $\|\cdot\|_p$-norm by $\B_p^n$. We say that a random variable $X$ has a $p$-generalized Gaussian distribution for some $1\leq p<\infty$ and we write $X\sim G_p$ if $X$ has density $f_p$ given by \eqref{eq:pGeneralizedGaussianDensity} with respect to the Lebesgue measure on $\R$.

We recall from \cite{SZ} the following probabilistic representation for a uniformly distributed random point in $\B_p^n$.

\begin{lemma}[Schechtman and Zinn, \cite{SZ}]\label{lem:SZ}
Fix $1\leq p<\infty$ and let $Z$ be a uniformly distributed random point in $\B_p^n$. Then,
\[
Z\stackrel{d}{=}U^{1/n}\frac{Y}{\|Y\|_p}\,,
\]
where $U$ is uniformly distributed on $[0,1]$ and independent of $Y$, where $Y=(Y_1,\dots,Y_n)$ has independent coordinates $Y_1,\ldots,Y_n\sim G_p$.
\end{lemma}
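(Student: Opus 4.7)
The plan is to proceed via a polar integration formula adapted to the $\ell_p$-norm, passing through the \emph{cone measure} $\mu_{\mathrm{cone}}$ on $\partial\B_p^n$, which is defined by
\[
\mu_{\mathrm{cone}}(A) := \frac{\vol_n\bigl(\{r\omega : \omega \in A,\, r\in[0,1]\}\bigr)}{\vol_n(\B_p^n)}
\]
for Borel sets $A\subset\partial\B_p^n$. Unpacking this definition via the substitution $y = r\omega$ with $r\ge 0$ and $\omega\in\partial\B_p^n$, and exploiting the positive homogeneity of $\|\cdot\|_p$, yields the integration formula
\[
\int_{\R^n} g(y)\,dy \;=\; n\,\vol_n(\B_p^n)\int_0^\infty \int_{\partial\B_p^n} g(r\omega)\, r^{n-1}\, d\mu_{\mathrm{cone}}(\omega)\,dr
\]
for every non-negative measurable $g\colon\R^n\to\R$. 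I would either quote this as a standard consequence of the coarea/change-of-variables theorem for $1$-homogeneous norms, or derive it directly by noting that, for any $0\le r_1<r_2$ and Borel $A\subset \partial\B_p^n$, the set $\{r\omega:\omega\in A,\, r\in[r_1,r_2]\}$ has Lebesgue measure $(r_2^n-r_1^n)\,\vol_n(\B_p^n)\,\mu_{\mathrm{cone}}(A)$.

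The first key step is to analyze the polar decomposition of $Y$. Since the joint density of $Y = (Y_1,\ldots,Y_n)$ equals
\[
f_Y(y) \;=\; \frac{1}{\bigl(2p^{1/p}\Gamma(1+1/p)\bigr)^n}\,e^{-\|y\|_p^p/p},
\]
it depends on $y$ only through $\|y\|_p$. Applying the polar formula to $\E[g(\|Y\|_p)\,h(Y/\|Y\|_p)]$ for bounded measurable $g,h$, the integral splits as a product of an $r$-integral and an $\omega$-integral. I would read off from this factorization that $R:=\|Y\|_p$ and $\Theta:=Y/\|Y\|_p$ are independent and that $\Theta\sim \mu_{\mathrm{cone}}$ (the precise density of $R$, proportional to $r^{n-1}e^{-r^p/p}$, will not be needed in what follows).

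The second step is a parallel polar decomposition for $Z$. Applying the same formula to the uniform density $\mathbbm{1}_{\B_p^n}/\vol_n(\B_p^n)$ gives
\[
\E[g(Z)] \;=\; n\int_0^1 r^{n-1}\int_{\partial\B_p^n} g(r\omega)\,d\mu_{\mathrm{cone}}(\omega)\,dr \;=\; \E[g(R'\,\Theta')],
\]
where $\Theta'\sim\mu_{\mathrm{cone}}$ and $R'$ is independent of $\Theta'$ with density $n r^{n-1}\mathbbm{1}_{[0,1]}(r)$. A one-line computation ($\Pro(U^{1/n}\le r)=r^n$ for $r\in[0,1]$) shows $U^{1/n}\stackrel{d}{=}R'$, and since $\Theta\stackrel{d}{=}\Theta'$ with both independent of their respective radii, combining the two polar representations gives the distributional identity $Z \stackrel{d}{=} U^{1/n}\,Y/\|Y\|_p$.

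The only real obstacle is establishing the polar integration formula cleanly; once that is in place, both halves of the proof reduce to matching the radial densities against the common angular distribution $\mu_{\mathrm{cone}}$ on $\partial\B_p^n$.
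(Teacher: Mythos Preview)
Your argument is correct and is in fact the standard way to establish this representation: use the $\ell_p$-polar integration formula to see that both $Y$ and $Z$ factor into independent radial and angular parts, with the angular part in each case distributed according to the cone measure on $\partial\B_p^n$, and then match the radii via $\Pro(U^{1/n}\le r)=r^n$. Note, however, that the paper does not supply its own proof of this lemma at all; it is simply quoted from Schechtman and Zinn \cite{SZ} as a known probabilistic representation, so there is no ``paper's proof'' to compare against. Your write-up is essentially the argument one finds in the original source (and in later expositions such as Naor's \cite{NaorConeSurface}), and would serve perfectly well as a self-contained proof here.
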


We shall also exploit the well-known fact that
\begin{align}\label{eq:volume p-balls}
\vol_n(\B_{p}^n) = \frac{\big(2\Gamma(1+\frac{1}{p})\big)^n}{\Gamma(1+\frac{n}{p})}\,.
\end{align}

\subsection{The Skorokhod-Dudley lemma}

We shall use the following technical device that allows to translate convergence in distribution to almost sure convergence on an appropriate probability space.

\begin{lemma}[Skorokhod and Dudley, Theorem 4.30 in \cite{Kallenberg}]\label{lem:Skorohod}
Let $\xi,\xi_1,\xi_2,\ldots$ be random elements taking values in a separable metric space such that $\xi_n\overset{d}{\longrightarrow}\xi$, as $n\to\infty$. Then there exists a probability space with random elements $\widetilde{\xi},\widetilde{\xi}_1,\widetilde{\xi}_2\ldots$ such that $\widetilde{\xi}\overset{d}{=}\xi$, $\widetilde{\xi}_n\overset{d}{=}\xi_n$ for all $n\in\N$ and $\widetilde{\xi}_n\overset{a.s.}{\longrightarrow}\widetilde{\xi}$, as $n\to\infty$.
\end{lemma}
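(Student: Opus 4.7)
The plan is a two-stage argument, realising all coupled copies on the common probability space $([0,1],\mathrm{Leb})$. First I would dispose of the scalar case via the quantile transformation, and then extend to an arbitrary separable metric space $S$ by building a single coupling compatible with a nested sequence of fine Borel partitions of $S$.

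For the scalar case, let $F_n$ and $F$ denote the distribution functions of $\xi_n$ and $\xi$, and $F_n^{-1}, F^{-1}$ their left-continuous generalised inverses. With $U$ the identity function on $[0,1]$, I would set $\widetilde{\xi}_n := F_n^{-1}(U)$ and $\widetilde{\xi}:= F^{-1}(U)$. The standard identity $\{u : F_n^{-1}(u)\leq x\} = \{u : u \leq F_n(x)\}$ yields the required distributional identities. Weak convergence $\xi_n\todistr\xi$ combined with the monotonicity of the quantile functions gives $F_n^{-1}(u)\to F^{-1}(u)$ at every continuity point of $F^{-1}$; since $F^{-1}$ has only countably many discontinuities, almost sure convergence follows.

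For the separable metric space case, for each $k\in\N$ I would construct a countable Borel partition $\mathcal{P}_k=\{B_{k,i}\}_{i\geq 1}$ of $S$ such that $\diam(B_{k,i})\leq 1/k$, $\mu(\partial B_{k,i})=0$ (where $\mu$ is the law of $\xi$), and $\mathcal{P}_{k+1}$ refines $\mathcal{P}_k$. Such partitions exist because $S$ is separable and, for any fixed point, all but countably many concentric spheres are $\mu$-null; covering by small balls with $\mu$-null boundary and disjointifying works, with refinement built in inductively. By the portmanteau theorem, the law $\mu_n$ of $\xi_n$ satisfies $\mu_n(B_{k,i})\to\mu(B_{k,i})$ for all $k,i$. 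On $([0,1],\mathrm{Leb})$ I would then assemble the coupling inductively in $k$: at level $k$ I would match the $\mathcal{P}_k$-cell containing $\widetilde{\xi}_n$ with that of $\widetilde{\xi}$ with maximal probability, which by portmanteau is at least $1-\varepsilon_{n,k}$ with $\varepsilon_{n,k}\to 0$ as $n\to\infty$; within each matched cell I would refine via a Borel isomorphism with a subinterval of $[0,1]$ and Step~1 applied to the conditional laws. Choosing a fast enough subsequence so that $\sum_n\varepsilon_{n,k}<\infty$ for each $k$ and invoking Borel--Cantelli would then force $\widetilde{\xi}_n$ and $\widetilde{\xi}$ to share a $\mathcal{P}_k$-cell eventually for every $k$, so $d(\widetilde{\xi}_n,\widetilde{\xi})\leq 1/k$ eventually, giving almost sure convergence.

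The hard part is stitching the couplings at successive refinement levels into a \emph{single} consistent coupling whose marginals are exactly $\xi_n$ and $\xi$, and not merely close to them. This demands measurable selections of conditional couplings inside each cell, together with a careful bookkeeping so that the mismatch events at different scales can all be controlled by Borel--Cantelli without destroying the exact marginals. This is precisely the technical core of Dudley's extension of Skorokhod's original theorem as presented in Kallenberg.
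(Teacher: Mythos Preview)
The paper does not prove this lemma; it is quoted without proof as Theorem~4.30 in Kallenberg, so there is no in-paper argument to compare your sketch against.

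Your outline has broadly the right shape (quantile transform for $\R$, then nested Borel partitions of $S$ with $\mu$-null boundaries), but the passage ``choosing a fast enough subsequence so that $\sum_n\varepsilon_{n,k}<\infty$ for each $k$ and invoking Borel--Cantelli'' is a genuine misstep. The statement demands almost sure convergence along the \emph{full} sequence $(\widetilde{\xi}_n)_{n\in\N}$; thinning in $n$ would only yield the conclusion along that subsequence, and there is no general device to upgrade back. The standard argument avoids this entirely: one realises $\widetilde{\xi}$ on $[0,1]$ by recursively subdividing $[0,1]$ into subintervals of lengths $\mu(B_{k,i})$ matching the nested partitions, and realises each $\widetilde{\xi}_n$ by the same scheme with lengths $\mu_n(B_{k,i})$. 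Since $\mu_n(B_{k,i})\to\mu(B_{k,i})$ for every cell, the interval endpoints at level $k$ for $\widetilde{\xi}_n$ converge to those for $\widetilde{\xi}$; hence for Lebesgue-a.e.\ $u\in[0,1]$ and every fixed $k$, the points $\widetilde{\xi}_n(u)$ and $\widetilde{\xi}(u)$ eventually lie in the same $\mathcal{P}_k$-cell, giving $\limsup_n d(\widetilde{\xi}_n(u),\widetilde{\xi}(u))\leq 1/k$ directly. Letting $k\to\infty$ finishes the proof without any Borel--Cantelli step. Your closing paragraph is right that the delicate point is building a \emph{single} coupling with exact marginals across all levels, but the mechanism you propose for controlling the mismatch events would not deliver the full-sequence conclusion.
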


\subsection{Large deviations}

To keep our paper reasonably self-contained, we recall some key concepts from the theory of large deviations. For further background material and references the reader is directed to \cite{DemboZeitouni} and Chapter 27 in \cite{Kallenberg}.

Let us recall that a sequence $\bX:=(X_n)_{n\in\N}$ of random elements taking values in some Hausdorff topological space $\XX$ satisfies a large deviation principle (LDP) with speed $s(n)$ and rate function ${\mathcal I}_\bX$ if $s:\N\to (0,\infty)$, $\mathcal{I}_\bX:\XX\to[0,\infty]$ is lower semi-continuous, and if
\begin{equation*}
\begin{split}
-\inf_{x\in A^\circ}\mathcal{I}_\bX(x) &\leq\liminf_{n\to\infty}{1\over s(n)}\log\Pro(X_n\in A)\\
&\leq\limsup_{n\to\infty}{1\over s(n)}\log\Pro(X_n\in A)\leq-\inf_{x\in\overline{A}}\mathcal{I}_\bX(x)
\end{split}
\end{equation*}
for all Borel sets $A\subset\XX$. One says that the rate function $\mathcal{I}_\bX$ is good if it has compact level sets $\{x\in\XX\,:\, \mathcal{I}_\bX(x) \leq \alpha \}$, $\alpha\in[0,\infty)$.

For a moment, let $\XX=\R^d$ for some $d\in\N$. We denote by $\Lambda^*$ the Legendre-Fenchel transform of a function $\Lambda:\R^d\to\R \cup\{+\infty\}$, which is defined as
$$
\Lambda^*(x):=\sup_{u\in\R^d}[\langle u, x\rangle -\Lambda(u)]\,,\qquad x\in\R^d\,,
$$
where $\langle\,\cdot\,,\,\cdot\,\rangle$ is the standard scalar product on $\R^d$. Moreover, we define the (effective) domain of $\Lambda$ to be the set $D_{\Lambda}:=\{u\in\R^d:\Lambda(u)<\infty\}\subset\R^d$.

\begin{lemma}[Cram\'er's theorem, Theorem 27.5 in \cite{Kallenberg}]\label{lem:cramer}
Let $X,X_1,X_2,\ldots$ be independent and identically distributed random vectors taking values in $\R^d$. Assume that the origin is an interior point of $D_\Lambda$, where $\Lambda(u)=\log \E e^{\langle u,X\rangle}$. Then, the partial sums ${1\over n}\sum\limits_{i=1}^n X_i$, $n\in\N$, satisfy an LDP on $\R^d$ with speed $n$ and good rate function $\Lambda^*$.
\end{lemma}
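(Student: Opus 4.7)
The plan is to follow the classical two-step route for Cram\'er's theorem, proving the exponential upper and lower bounds separately, and to verify goodness of $\Lambda^*$ using the hypothesis $0\in D_\Lambda^\circ$. Throughout I set $\bar X_n := \tfrac{1}{n}\sum_{i=1}^n X_i$.

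For the upper bound, the starting point is the exponential Chebyshev inequality: for every $u\in\R^d$ and every Borel set $A\subset\R^d$,
\[
\Pro(\bar X_n\in A)\leq e^{-n\inf_{x\in A}\langle u,x\rangle}\,\E\,e^{n\langle u,\bar X_n\rangle}=\exp\!\big(-n[\inf_{x\in A}\langle u,x\rangle-\Lambda(u)]\big).
\]
Optimizing over $u$ yields the bound $-\inf_{x\in A}\Lambda^*(x)$ whenever $A$ is a closed half-space. To pass to an arbitrary closed set $F$, I would first establish exponential tightness: the hypothesis $0\in D_\Lambda^\circ$ implies $\E\,e^{r\|X\|}<\infty$ for some $r>0$, and a further Chebyshev estimate then gives $\limsup_{n\to\infty}\tfrac{1}{n}\log\Pro(\|\bar X_n\|>R)\to -\infty$ as $R\to\infty$. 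It then suffices to cover $F\cap\overline{B(0,R)}$ by finitely many small balls and to apply, on each of them, a supporting half-space version of the preceding estimate.

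For the lower bound I would use exponential tilting (the Cram\'er transform). It suffices to show that, for every $x\in\R^d$ and every $\delta>0$,
\[
\liminf_{n\to\infty}\tfrac{1}{n}\log\Pro(\bar X_n\in B(x,\delta))\geq -\Lambda^*(x).
\]
If $x=\nabla\Lambda(u)$ for some $u\in D_\Lambda^\circ$, I would introduce the tilted law $\widetilde\mu$ given by $d\widetilde\mu/d\mu(y)=e^{\langle u,y\rangle-\Lambda(u)}$, whose mean is precisely $x$. Writing $\Pro(\bar X_n\in B(x,\delta))$ as a tilted expectation of the inverse density and bounding $\langle u,\bar X_n\rangle$ on the event $\{\bar X_n\in B(x,\delta)\}$ produces
\[
\Pro(\bar X_n\in B(x,\delta))\geq e^{-n\Lambda^*(x)-n\delta\|u\|}\,\widetilde{\Pro}(\bar X_n\in B(x,\delta)),
\]
and the weak law of large numbers under $\widetilde{\Pro}$ drives the tilted probability to $1$; letting $\delta\downarrow 0$ then closes this case.

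The main obstacle will be to handle points $x$ with $\Lambda^*(x)<\infty$ at which the supremum defining $\Lambda^*(x)$ is not attained at any interior point of $D_\Lambda$, so that no exposed tilting direction is available. This is standard convex-analytic territory and is dealt with by approximating such $x$ by points of the form $\nabla\Lambda(u_k)$ with $u_k\in D_\Lambda^\circ$, invoking convexity and lower semi-continuity of $\Lambda^*$ to pass to the limit. Finally, goodness of $\Lambda^*$ follows because $\Lambda^*$ is convex and lower semi-continuous as a supremum of affine functions, and because the bound $\Lambda(u)\leq M$ on some ball $\{u:\|u\|\leq r\}$ forces $\Lambda^*(x)\geq r\|x\|-M$; this superlinear coercivity bounds every sublevel set, which is therefore compact in $\R^d$.
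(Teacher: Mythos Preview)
The paper does not prove this lemma: it is stated as a quotation from the literature (Theorem~27.5 in \cite{Kallenberg}) and used as a black box in the proofs of the LDPs. So there is no paper proof to compare against.

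Your sketch is the standard proof of the multivariate Cram\'er theorem and is correct in outline. The upper bound via exponential Chebyshev, covering, and exponential tightness (available because $0\in D_\Lambda^\circ$ yields $\E e^{r\|X\|}<\infty$ for some $r>0$) is exactly right, and the goodness argument via the coercivity bound $\Lambda^*(x)\geq r\|x\|-M$ is clean. The one place where real work remains---and you flag it correctly---is extending the lower bound from exposed points $x=\nabla\Lambda(u)$, $u\in D_\Lambda^\circ$, to arbitrary $x$ with $\Lambda^*(x)<\infty$; the approximation by exposed points you describe is the right idea, but it relies on nontrivial convex-analytic facts (e.g., that the exposed points of $\Lambda^*$ with exposing hyperplanes in $D_\Lambda^\circ$ are dense enough to recover the full rate function via lower semi-continuity). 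Since you explicitly identify this as the main obstacle and point to the correct mechanism, the proposal is sound as a proof plan.
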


To treat large deviations for $\ell_p$-balls with $p<q$ we need the following version of Cram\'er's theorem for sums of so-called stretched exponential random variables from \cite{GantertRamananRembart}. We directly formulate it in the form that is needed in our framework.

\begin{lemma}[Cram\'er's theorem for stretched exponentials, Theorem 1 in \cite{GantertRamananRembart}]\label{lem:GantertRamananRembart}
Let $X,X_1,X_2,\ldots$ be non-negative, independent and identically distributed random variables.  
Assume that there are constants $r\in(0,1)$, $t_0>0$ and slowly varying functions $c_1,c_2,b:(0,\infty)\to(0,\infty)$ such that
$$
c_1(t)e^{-b(t)t^r}\leq \Pro(X\geq t)\leq c_2(t)e^{-b(t)t^r}\,,\qquad t\geq t_0\,.
$$
Then, the sequence of random variables ${1\over n}\sum\limits_{i=1}^nX_i$, $n\in\N$, satisfies an LDP on $\R$ with speed $b(n)n^r$ and good rate function
$$
\mathcal{I}(z) = \begin{cases}
(x-\E X)^r &: z\geq \E X\\
+\infty &: \text{otherwise}\,.
\end{cases}
$$
\end{lemma}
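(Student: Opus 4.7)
My plan rests on the \emph{single big jump} principle: for the stretched-exponential tail assumed here, the cheapest way for $S_n:=X_1+\dots+X_n$ to exceed $nz$ with $z>\E X$ is for a single summand to carry essentially the whole excess while the remaining $n-1$ variables behave typically. I would prove matching one-sided bounds on $(b(n)n^r)^{-1}\log\Pro(S_n\geq nz)$ for arbitrary $z>\E X$ and then deduce the full LDP with the good rate function $\mathcal I$ stated in the lemma.

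For the lower bound, fix $\varepsilon>0$ and use independence to write
\[
\Pro(S_n\geq nz)\;\geq\;\Pro\bigl(X_1\geq n(z-\E X+\varepsilon)\bigr)\,\Pro\bigl(X_2+\dots+X_n\geq (n-1)\E X-n\varepsilon\bigr).
\]
The weak law of large numbers makes the second factor tend to $1$, and the assumed lower tail estimate combined with the slow variation of $b$ (so that $b(cn)/b(n)\to 1$ for every fixed $c>0$) and the fact that $\log c_1(cn)=o(n^r b(n))$ (because the logarithm of a slowly varying function grows slower than any positive power of $n$, whereas $b(n)n^r$ is regularly varying of positive index $r$) yields
\[
\liminf_{n\to\infty}\frac{\log\Pro(S_n\geq nz)}{b(n)n^r}\;\geq\;-(z-\E X+\varepsilon)^r.
\]
Sending $\varepsilon\downarrow 0$ produces the desired lower bound.

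For the upper bound, set $a_n:=n(z-\E X-\varepsilon)$ and split
\[
\Pro(S_n\geq nz)\;\leq\;\Pro\Bigl(\max_{1\leq i\leq n}X_i\geq a_n\Bigr)+\Pro\bigl(S_n^{\rm tr}\geq nz\bigr),\qquad S_n^{\rm tr}:=\sum_{i=1}^n X_i\mathbf 1_{\{X_i<a_n\}}.
\]
The first term is at most $n\,\Pro(X\geq a_n)$ by a union bound, which together with the upper tail assumption and the same slow-variation arguments contributes the desired $-(z-\E X-\varepsilon)^r$ to the limit. Handling $S_n^{\rm tr}$ is the main technical obstacle: I would apply the exponential Chebyshev inequality at a parameter $\lambda_n$ of order $b(a_n)a_n^{r-1}$, the scale matching the slope of the candidate rate function at $z-\varepsilon$, and show by a Taylor expansion of the truncated cumulant generating function that $n\log\E\exp(\lambda_n X\mathbf 1_{\{X<a_n\}})=n\lambda_n\E X+o(b(n)n^r)$, so that this term is exponentially negligible compared with the first. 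Letting $\varepsilon\downarrow 0$ gives the matching upper bound. To upgrade these tail estimates to a full LDP, note that $\mathcal I$ is lower semi-continuous with compact sublevel sets (hence good) and non-decreasing on its effective domain, while the left tail $\Pro(S_n/n\leq \E X-\delta)$ decays at the faster classical Cram\'er speed $n\gg b(n)n^r$ by Lemma~\ref{lem:cramer} (applicable because $X\geq 0$ has all negative exponential moments), so left deviations are negligible on the present scale. The decisive role of $r<1$ is twofold: it ensures $\lambda_n a_n\to\infty$, so that the big jump truly dominates the truncated contribution, and it places the heavy-tail LDP strictly below Cram\'er's classical scale.
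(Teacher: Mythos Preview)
The paper does not prove this lemma: it is quoted verbatim as Theorem~1 of \cite{GantertRamananRembart}, with only a remark that the tail estimate there can be upgraded to a full LDP by standard arguments once the random variables are non-negative. So there is no ``paper's own proof'' to compare against.

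Your sketch is precisely the \emph{single big jump} strategy that underlies the result in \cite{GantertRamananRembart}, and the overall architecture is correct: force one summand to carry the excess for the lower bound; for the upper bound, split off the event that the maximum exceeds $a_n$ and show the truncated sum is negligible at speed $b(n)n^r$; handle the left tail at the faster Cram\'er speed $n$ using that $X\geq 0$ has all negative exponential moments. The one place where your sketch is genuinely thin is the truncated-sum estimate. A first-order Taylor expansion of the truncated cumulant generating function will not by itself give $n\log\E e^{\lambda_n X\mathbf 1_{\{X<a_n\}}}=n\lambda_n\E X+o(b(n)n^r)$, because $\lambda_n a_n$ is of the same order $b(n)n^r$ as the speed, so the contribution from $X$ near the truncation level is not perturbative. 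What one actually needs is a further decomposition of the integration range (a ``small'' zone where Taylor applies and an ``intermediate'' zone where the tail bound on $\Pro(X\geq t)$ is used directly) together with the slow variation of $b$; this is exactly the technical work carried out in \cite{GantertRamananRembart}. With that caveat, your plan is sound and matches the literature.
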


\begin{remark}
In \cite{GantertRamananRembart} the result is not formulated as an LDP, but since the random variables are assumed to be non-negative, Theorem 1 in \cite{GantertRamananRembart} can be lifted to an LDP by means of standard methods, see also Remark 3.2 in \cite{GantertRamananRembart}.
\end{remark}

To transform a given LDP to another one by means of a continuous function, we shall use the following version of the contraction principle.

\begin{lemma}[Contraction principle, Theorem 27.11 in \cite{Kallenberg}]\label{lem:contraction principle}
Let $\XX$ and $\YY$ be two Hausdorff topological spaces and $F:\XX\to\YY$ be a continuous function. Further, let $\bX=(X_n)_{n\in\N}$ be a sequence of $\XX$-valued random elements that satisfies an LDP with speed $s(n)$ and good rate function $\mathcal{I}_\bX$. Then the sequence $\bY:=(F(X_n))_{n\in\N}$ satisfies an LDP on $\YY$ with the same speed and with the good rate function $\mathcal{I}_\bY=\mathcal{I}_\bX\circ F^{-1}$, i.e., $\mathcal{I}_\bY(y):=\inf\{\mathcal{I}_\bX(x):F(x)=y\}$, $y\in\YY$, with the convention that $\mathcal{I}_\bY(y)=+\infty$ if $F^{-1}(\{y\})=\emptyset$.
\end{lemma}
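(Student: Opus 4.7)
The strategy is to verify the three standard ingredients of an LDP for the pushed-forward sequence $\mathbf{Y}$: (i) goodness (hence lower semicontinuity) of the candidate rate function $\mathcal{I}_\mathbf{Y}$, (ii) the large deviation upper bound for closed sets, and (iii) the lower bound for open sets. The whole argument rests on the elementary identity
\[
\Pro(F(X_n)\in A)=\Pro(X_n\in F^{-1}(A))\qquad(A\subset\YY)
\]
together with the fact that, because $F$ is continuous, $F^{-1}(C)\subset\XX$ is closed whenever $C\subset\YY$ is closed, and $F^{-1}(G)\subset\XX$ is open whenever $G\subset\YY$ is open.

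First I would prove the key variational identity
\[
\inf_{y\in A}\mathcal{I}_\mathbf{Y}(y)\;=\;\inf_{x\in F^{-1}(A)}\mathcal{I}_\mathbf{X}(x)\qquad(A\subset\YY),
\]
which is immediate from writing the outer infimum over $y\in A$ as an iterated infimum over $y\in A$ and $x\in F^{-1}(\{y\})$, and observing that as $y$ ranges over $A$ the pre-images partition $F^{-1}(A)$. With this identity in hand, the upper and lower bounds for $\mathbf{Y}$ become direct translations of those for $\mathbf{X}$: for a closed set $C\subset\YY$,
\[
\limsup_{n\to\infty}\tfrac1{s(n)}\log\Pro(F(X_n)\in C)\;=\;\limsup_{n\to\infty}\tfrac1{s(n)}\log\Pro(X_n\in F^{-1}(C))\;\leq\;-\inf_{x\in F^{-1}(C)}\mathcal{I}_\mathbf{X}(x),
\]
which equals $-\inf_{y\in C}\mathcal{I}_\mathbf{Y}(y)$ by the identity; the analogous chain with $\liminf$ and open sets gives the lower bound.

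It remains to check that $\mathcal{I}_\mathbf{Y}$ is a good rate function. Here I would argue that the sub-level set $L_\alpha^\mathbf{Y}:=\{y\in\YY:\mathcal{I}_\mathbf{Y}(y)\leq\alpha\}$ equals the image $F(L_\alpha^\mathbf{X})$ of the corresponding sub-level set of $\mathcal{I}_\mathbf{X}$: if $y=F(x)$ with $\mathcal{I}_\mathbf{X}(x)\leq\alpha$ then clearly $\mathcal{I}_\mathbf{Y}(y)\leq\alpha$, and conversely, since $L_\alpha^\mathbf{X}$ is compact and $F$ is continuous, $F^{-1}(\{y\})\cap L_\alpha^\mathbf{X}$ is compact, so the infimum defining $\mathcal{I}_\mathbf{Y}(y)$ is attained and yields an $x\in L_\alpha^\mathbf{X}$ with $F(x)=y$. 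Thus $L_\alpha^\mathbf{Y}$ is the continuous image of a compact set, hence compact. Since $\YY$ is Hausdorff, compact subsets are closed, so $L_\alpha^\mathbf{Y}$ is closed, which yields lower semicontinuity of $\mathcal{I}_\mathbf{Y}$.

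The only delicate step is the attainment of the infimum in the definition of $\mathcal{I}_\mathbf{Y}$, which is exactly where goodness of $\mathcal{I}_\mathbf{X}$ is used (without it, $F(L_\alpha^\mathbf{X})$ would not in general describe $L_\alpha^\mathbf{Y}$). The Hausdorff assumption on $\YY$ is needed to pass from compactness to closedness of level sets; everything else is a routine transportation of the LDP along $F^{-1}$.
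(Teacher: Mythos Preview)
The paper does not prove this lemma; it is simply quoted from Kallenberg's textbook as a preliminary tool, so there is no in-paper argument to compare against. Your proof is the standard one and is correct in substance.

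One small point in the goodness step deserves tightening. To show that $\mathcal{I}_{\bY}(y)\le\alpha$ implies $y\in F(L_\alpha^{\bX})$, you invoke compactness of $F^{-1}(\{y\})\cap L_\alpha^{\bX}$ to conclude that the infimum defining $\mathcal{I}_{\bY}(y)$ is attained there. But a priori this intersection could be empty (precisely in the borderline case $\mathcal{I}_{\bY}(y)=\alpha$ with the infimum not yet known to be attained), which makes the reasoning circular. The clean fix is to intersect instead with $L_{\alpha'}^{\bX}$ for some $\alpha'>\mathcal{I}_{\bY}(y)$: that intersection is compact \emph{and} nonempty, the lower semicontinuous $\mathcal{I}_{\bX}$ attains its minimum on it, and this minimum equals $\mathcal{I}_{\bY}(y)\le\alpha$, giving the desired $x\in L_\alpha^{\bX}$ with $F(x)=y$. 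With this adjustment your argument is complete.
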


Let us also recall that if two sequences of random variables are `exponentially close' they follow the same large deviation behavior

\begin{lemma}[Exponential equivalence, Lemma 27.13 in \cite{Kallenberg}]\label{lem:exponentially equivalent}
Let $\bX=(X_n)_{n\in\N}$ and $\bY=(Y_n)_{n\in\N}$ be two sequences of random variables and assume that $\bX$ satisfies an LDP with speed $s(n)$ and rate function $\mathcal{I}_\bX$. Further, suppose that $\bX$ and $\bY$ are {\rm exponentially equivalent}, i.e.,
$$
\limsup_{n\to\infty}{1\over s(n)}\log \Pro(|X_n-Y_n|>\delta) = -\infty
$$
for any $\delta>0$. Then $\bY$ satisfies an LDP with the same speed and the same rate function as $\bX$.
\end{lemma}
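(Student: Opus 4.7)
The plan is to establish both the upper and lower LDP bounds for $\bY$ by comparing each event $\{Y_n\in A\}$ to a nearby event for $X_n$, the discrepancy being controlled by exponential equivalence. The key algebraic tool is the principle of the largest term, $\log(a+b)\leq \log 2+\max(\log a,\log b)$, which converts additive probability estimates into bounds of LDP type after dividing by $s(n)$.

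For the upper bound, fix a Borel set $A\subseteq\R$ and $\delta>0$, and set $A^\delta:=\{x\in\R:d(x,\overline{A})\leq\delta\}$. I would use the inclusion
\[
\{Y_n\in A\}\subseteq\{X_n\in A^\delta\}\cup\{|X_n-Y_n|>\delta\},
\]
which is immediate since $X_n\notin A^\delta$ and $Y_n\in A\subseteq\overline A$ forces $|X_n-Y_n|>\delta$. Applying the principle of the largest term, dividing by $s(n)$, and taking $\limsup_{n\to\infty}$, the LDP upper bound for $\bX$ controls the first summand by $-\inf_{x\in A^\delta}\mathcal{I}_{\bX}(x)$, while exponential equivalence forces the second contribution to $-\infty$. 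This yields $\limsup_{n}\frac{1}{s(n)}\log\Pro(Y_n\in A)\leq -\inf_{x\in A^\delta}\mathcal{I}_{\bX}(x)$ for every $\delta>0$.

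It then remains to let $\delta\downarrow 0$ and verify $\inf_{x\in A^\delta}\mathcal{I}_{\bX}(x)\uparrow \inf_{x\in\overline{A}}\mathcal{I}_{\bX}(x)$; this is the step I expect to be the main obstacle, and it is precisely where goodness of $\mathcal{I}_{\bX}$ enters. Assuming the limit were strictly smaller than $\inf_{\overline A}\mathcal{I}_{\bX}$, one could pick $x_{\delta_k}\in A^{\delta_k}$ with $\mathcal{I}_{\bX}(x_{\delta_k})$ uniformly bounded; compactness of the sublevel sets of $\mathcal{I}_{\bX}$ provides a cluster point $x^\star$, which lies in $\overline A$ because $d(x_{\delta_k},\overline A)\leq \delta_k\to 0$, and lower semicontinuity of $\mathcal{I}_{\bX}$ contradicts the strict inequality.

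For the lower bound, given an open set $G\subseteq\R$ and a point $x_0\in G$ with $B(x_0,2\delta)\subseteq G$, the dual inclusion $\{X_n\in B(x_0,\delta)\}\cap\{|X_n-Y_n|\leq\delta\}\subseteq\{Y_n\in G\}$ gives
\[
\Pro(Y_n\in G)\geq \Pro(X_n\in B(x_0,\delta))-\Pro(|X_n-Y_n|>\delta).
\]
Applying the LDP lower bound for $\bX$ to the open ball $B(x_0,\delta)$ and invoking exponential equivalence, which makes the subtracted term negligible on the $s(n)$-exponential scale, I obtain $\liminf_{n}\frac{1}{s(n)}\log\Pro(Y_n\in G)\geq -\mathcal{I}_{\bX}(x_0)$; optimising over $x_0\in G$ delivers the required bound $-\inf_{x\in G}\mathcal{I}_{\bX}(x)$. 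Applying this with $G=A^\circ$ completes the proof.
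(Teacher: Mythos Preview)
The paper does not supply its own proof of this lemma; it is quoted from Kallenberg as background material. Your argument is the standard one (essentially that of Dembo--Zeitouni, Theorem~4.2.13) and is correct. One point worth flagging: your upper-bound step genuinely needs the rate function to be \emph{good}, which you correctly identify and use, although the lemma as stated in the paper omits that hypothesis; this is a harmless imprecision in the paper, since goodness is required for the result in general and holds in the paper's sole application (the proof of Theorem~\ref{thm:LDPp<q}).
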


\section{Proof of Theorem \ref{thm:CLTlpBall}}\label{sec:ProofCLT}

We are now prepared to present the proofs of our multivariate central and non-central limit theorems stated in Theorem \ref{thm:CLTlpBall} (a)--(c).
Before, we compute the absolute moments and covariances of $p$-generalized Gaussian random variables.

\begin{lemma}\label{lem:expectation}
Fix $1\leq p\leq\infty$. Let $X\sim G_p$ and $r,s\geq 0$. Then,
$$
\E|X|^r= M_p(r)\qquad\text{and}\qquad \Cov(|X|^r,|X|^s) = C_p(r,s).
$$
In particular, $\Var |X|^r = C_p(r,r)$.
\end{lemma}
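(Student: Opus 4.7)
The plan is to reduce the claim to a direct moment calculation, since both identities are elementary once the first moment formula is established.

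First I would handle the case $1 \leq p < \infty$. Writing the density $f_p$ from \eqref{eq:pGeneralizedGaussianDensity}, symmetry gives
\[
\E|X|^r \;=\; \frac{1}{p^{1/p}\Gamma(1+\tfrac{1}{p})}\int_0^\infty x^r\, e^{-x^p/p}\,\dif x.
\]
Then I would perform the substitution $u = x^p/p$, which gives $x = (pu)^{1/p}$ and $\dif x = p^{1/p}u^{1/p-1}\dif u$, turning the integral into
\[
\int_0^\infty x^r e^{-x^p/p}\dif x \;=\; p^{(r+1)/p}\int_0^\infty u^{(r+1)/p - 1}e^{-u}\dif u \;=\; p^{(r+1)/p}\,\Gamma\bigl(\tfrac{r+1}{p}\bigr).
\]
Using the identity $\Gamma\bigl(\tfrac{r+1}{p}\bigr) = \tfrac{p}{r+1}\Gamma\bigl(1+\tfrac{r+1}{p}\bigr)$ and dividing by the normalising constant yields exactly $M_p(r)$ as defined in \eqref{eq:Mpr}.

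Next, I would dispose of the case $p=\infty$, where $X$ is uniform on $[-1,1]$: a trivial one-line computation gives $\E|X|^r = \int_0^1 x^r\dif x = 1/(r+1) = M_\infty(r)$, consistent with the convention $M_\infty(\infty)=0$.

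Finally, for the covariance, I would simply invoke bilinearity,
\[
\Cov(|X|^r,|X|^s) \;=\; \E\bigl(|X|^{r+s}\bigr) - \E|X|^r\,\E|X|^s \;=\; M_p(r+s) - M_p(r)M_p(s) \;=\; C_p(r,s),
\]
where the first equality used $|X|^r\cdot|X|^s = |X|^{r+s}$ and the moment formula just proved. Setting $s=r$ gives $\Var|X|^r = C_p(r,r)$. The special conventions $C_\infty(\infty,\infty)=C_\infty(\infty,q)=0$ need only a brief verification consistent with $M_\infty(\infty)=0$. There is no genuine obstacle here; the only mild bookkeeping point is matching the $\Gamma$-factors after the substitution to the exact form of $M_p(r)$ given in \eqref{eq:Mpr}.
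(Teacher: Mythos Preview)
Your approach is essentially identical to the paper's: both reduce to the substitution $u=x^p/p$ (the paper first factors $x^r=x^{r-p+1}\cdot x^{p-1}$ so that $x^{p-1}\dif x=\dif y$ appears directly) and then obtain the covariance from the one-line identity $\Cov(|X|^r,|X|^s)=\E|X|^{r+s}-\E|X|^r\,\E|X|^s$. One minor arithmetic slip to fix: the differential should read $\dif x = p^{1/p-1}u^{1/p-1}\dif u$ (you dropped a factor $1/p$), so the integral is $p^{(r+1)/p-1}\Gamma\big(\tfrac{r+1}{p}\big)$; with this correction the constants match $M_p(r)$ exactly.
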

\begin{proof}
Let $p<\infty$. We have
\begin{align*}
\E|X|^r & = \frac{1}{2p^{1/p}\Gamma(1+\frac{1}{p})}\int_{-\infty}^{\infty}|x|^re^{-|x|^p/p}\dif x\\
& = \frac{1}{p^{1/p}\Gamma(1+\frac{1}{p})}\int_{0}^{\infty}x^{r-p+1}e^{-x^p/p}\,x^{p-1}\dif x.
\end{align*}
Using the substitution $y=x^p/p$, we obtain
\begin{align*}
\E|X|^r & = \frac{1}{p^{1/p}\Gamma(1+\frac{1}{p})}\int_{0}^{\infty}(yp)^{r-p+1\over p}e^{-y}\,\dif y = {p^{r-p\over p}\over \Gamma(1+{1\over p})}\Gamma\Big({r+1\over p}\Big)\\
&= \frac{p^{r/p}}{r+1}\,\frac{\Gamma(1+\frac{r+1}{p})}{\Gamma(1+\frac{1}{p})}= M_p(r).
\end{align*}
On the other hand, if $p=\infty$, we have
$$
\E|X|^r = {1\over 2}\int_{-1}^1|x|^r\dif x = {1\over r+1} = M_\infty(r).
$$
Moreover, for all $1\leq p\leq\infty$ we have that
\begin{align*}
\Cov(|X|^r,|X|^s) & = \E |X|^{r+s}-\E|X|^r \E|X|^s\\
& =M_p(r+s) - M_p(r)M_p(s) = C_p(r,s)
\end{align*}
and the proof is complete.
\end{proof}

We now present separately the proofs of parts (a), (b), and (c) of Theorem \ref{thm:CLTlpBall}.

\begin{proof}[Proof of Theorem \ref{thm:CLTlpBall}, part (a)]
Assume that $p<\infty$ and fix $i\in\{1,\ldots,d\}$. For any $n\in\N$, define the following random variables
\begin{equation}\label{eq:DefXinEtan}
\xi_n^{(i)} := \frac{1}{\sqrt{n}}\sum\limits_{j=1}^n\Big(|Y_j|^{q_i}-M_p(q_i)\Big)\qquad\text{and}\qquad\eta_n := \frac{1}{\sqrt{n}}\sum\limits_{j=1}^n\Big(|Y_j|^p-1\Big)\,,
\end{equation}
where $Y_1,\ldots,Y_n$ are independent $p$-generalized Gaussians.
Then, by the classical central limit theorem (see, e.g., Proposition 5.9 in \cite{Kallenberg}),
\[
\xi_n^{(i)} \todistr \xi \sim\mathcal N\big(0,C_p(q_i,q_i)\big)
\]
and
\[
\eta_n \todistr \eta \sim\mathcal N\big(0,C_p(p,p)\big)\,.
\]
Moreover, from the multivariate central limit theorem \cite[Theorem 11.10]{Breiman}, we obtain that
\[
\big(\xi_n^{(1)},\ldots,\xi_n^{(d)},\eta_n\big) \todistr \big(\xi^{(1)},\ldots,\xi^{(d)},\eta\big)\sim \mathcal N\big({\bf 0},\Sigma\big)\,,
\]
where ${\bf 0} := (0,\ldots,0)\in\R^{d+1}$ and the covariance matrix is given by
\[
\Sigma =
\begin{pmatrix}
C_p(q_1,q_1) & \ldots & C_p(q_1,q_d) & C_p(q_1,p)   \\
\vdots &  & \vdots & \vdots   \\
C_p(q_d,q_1) & \ldots & C_p(q_d,q_d) & C_p(q_d,p)   \\
\end{pmatrix}.
\]
Using Lemma~\ref{lem:SZ} and the definitions of $\xi_n^{(i)}$ and $\eta_n$, we find that
\begin{align*}
\|Z\|_{q_i} & \stackrel{\text{d}}{=}U^{1/n}\frac{\|Y\|_{q_i}}{\|Y\|_p} \label{eq:ZUY}\\
 \nonumber &= U^{1/n}\frac{\Big(nM_p(q_i)+\sqrt{n}\xi_n^{(i)}\Big)^{1/q_i}}{\Big(n+\sqrt{n}\eta_n\Big)^{1/p}} \\
 \nonumber &\stackrel{\text{}}{=}U^{1/n}\frac{\big(nM_p(q_i)\big)^{1/q_i}}{n^{1/p}}\, \frac{\left(1+\frac{\xi_n^{(i)}}{\sqrt{n}M_p(q_i)}\right)^{1/q_i}}{\left(1+\frac{\eta_n}{\sqrt{n}}\right)^{1/p}}\\
 \nonumber & = U^{1/n} \frac{\big(nM_p(q_i)\big)^{1/q_i}}{n^{1/p}}\, F_i\bigg(\frac{\xi_n^{(i)}}{\sqrt{n}}, \frac{\eta_n}{\sqrt{n}}\bigg)\,,
\end{align*}
where $F_i: \R\times (\R\setminus\{-1 \})\to\R$ is the continuous function given by
\[
F_i(x,y):= \frac{\Big(1+\frac{x}{M_p(q_i)}\Big)^{1/q_i}}{(1+y)^{1/p}}\,.
\]
Using the Skorokhod-Dudley device (Lemma \ref{lem:Skorohod}), we may switch to a probability space carrying random variables $\widetilde \xi_n^{(i)},\widetilde \eta_n, \widetilde \xi^{(i)}$ and $\widetilde \eta$, $n\in\N$, so that
$$
(\widetilde \xi_n^{(1)},\ldots,\widetilde \xi_n^{(d)},\widetilde{\eta}_n)\overset{\text{d}}{=}(\xi_n^{(1)},\ldots, \xi_n^{(d)},{\eta}_n)\qquad\text{and}\qquad(\widetilde \xi^{(1)},\ldots,\widetilde \xi^{(d)},\widetilde{\eta})\overset{\text{d}}{=}(\xi^{(1)},\ldots, \xi^{(d)},{\eta})
$$
and such that for each $i\in\{1,\ldots,d\}$ we have the following almost sure convergence:
\begin{equation}\label{eq:ASConvergence}
\widetilde{\xi}_n^{(i)}\toas\widetilde \xi^{(i)}\quad\text{and}\quad \widetilde{\eta}_n\toas\widetilde{\eta}.
\end{equation}
Again, fix some $i\in\{1,\ldots,d\}$. Note that by~\eqref{eq:ASConvergence} both $\frac{\widetilde \xi_n^{(i)}}{\sqrt{n}}$ and $\frac{\widetilde \eta_n}{\sqrt{n}}$ converge to $0$ almost surely, as $n\to\infty$. Thus, we may use Taylor expansion of $F_i$ around $(0,0)$. We obtain
\begin{align*}
F_i(x,y) &= F_i(0,0)+{\partial F_i\over\partial x}\bigg\vert_{(0,0)}\,x + {\partial F_i\over\partial y}\bigg\vert_{(0,0)}\,y +\mathcal O(x^2+y^2)\\
& = 1+\frac{x}{q_iM_p(q_i)} - {y\over p}+\mathcal O(x^2+y^2)\,.
\end{align*}
Notice that
$$
U^{1/n}=e^{{1\over n}\log(U)}\stackrel{\text{d}}{=}e^{-\frac{E}{n}}\,,
$$
where $E$ is an exponential random variable with mean $1$. Therefore, using the series expansion of the exponential,
\begin{equation}\label{eq:UExp}
e^{-\frac{E}{n}} = 1-\frac{E}{n}+\mathcal O\Big(\frac{1}{n}\Big)\,.
\end{equation}
Let us emphasize that here and below the Landau symbols have to be understood in the almost sure sense. Altogether, we obtain
\begin{align*}
\|Z\|_{q_i} & \stackrel{\text{d}}{=} U^{1/n} \frac{(nM_p(q_i))^{1/q_i}}{n^{1/p}}\, F_i\bigg(\frac{\widetilde \xi_n^{(i)}}{\sqrt{n}}, \frac{\widetilde \eta_n}{\sqrt{n}}\bigg) \\
&=\Big(1-\frac{E}{n}\Big)n^{1/q_i-1/p}M_p(q_i)^{1/q_i}\bigg(1+\frac{1}{q_iM_p(q_i)}\frac{\widetilde \xi_n^{(i)}}{\sqrt{n}}-{1\over p}\frac{\widetilde \eta_n}{\sqrt{n}}+\mathcal O\Big(\frac{1}{n}\Big) \bigg)\\
& =n^{1/q_i-1/p}M_p(q_i)^{1/q_i}\bigg(1+\frac{1}{q_iM_p(q_i)}\frac{\widetilde \xi_n^{(i)}}{\sqrt{n}}-{1\over p}\frac{\widetilde \eta_n}{\sqrt{n}}+\mathcal O\Big(\frac{1}{n}\Big) \bigg)\,.
\end{align*}
Now, it follows from \eqref{eq:ASConvergence} that
\[
\sqrt{n}\,\bigg(\frac{1}{q_iM_p(q_i)}\frac{\widetilde \xi_n^{(i)}}{\sqrt{n}}-{1\over p}\frac{\widetilde \eta_n}{\sqrt{n}}\bigg) \toas \frac{1}{q_iM_p(q_i)}\widetilde \xi^{(i)}-{1\over p}\widetilde \eta\,.
\]
Therefore,  we conclude that
\[
\sqrt{n}\,\bigg(n^{1/p-1/q_i}\frac{1}{M_p(q_i)^{1/q_i}}\|Z\|_{q_i}-1\bigg)\toas \frac{1}{q_iM_p(q_i)}\widetilde \xi^{(i)}-{1\over p}\widetilde \eta\,.
\]
Since $\widetilde \xi^{(i)}$ and $\widetilde \eta$ are jointly Gaussian with covariance matrix $\Sigma_i$ given by
\[
\Sigma_i =
\begin{pmatrix}
C_p(q_i,q_i) & C_p(q_i,p)   \\
C_p(p,q_i) & C_p(p,p)  \\
\end{pmatrix}\,,
\]
we see that the random variable $\frac{1}{q_iM_p(q_i)}\widetilde \xi^{(i)}-{1\over p}\widetilde \eta$ is centered Gaussian with variance
\[
\frac{C_p(q_i,q_i)}{q_i^2M_p(q_i)^2}+{C_p(p,p)\over p^2}- \frac{2C_p(p,q_i)}{pq_iM_p(q_i)}\,.
\]
From the almost sure convergence we also conclude the distributional convergence and on this level we can replace the tilded random variables with the original non-tilded ones. Applying once more the multivariate central limit theorem, we thus conclude that the random vector
$$
\sqrt{n}\,\bigg(n^{1/p-1/q_i}\frac{1}{M_p(q_i)^{1/q_i}}\|Z\|_{q_i}-1\bigg)_{i=1}^d
$$
converges to a centered Gaussian vector with covariance matrix ${\bf C}=(c_{ij})_{i,j=1}^d$, where $c_{ij}$ is given by
\begin{align*}
c_{ij} 
&=\Cov\bigg({1\over q_iM_p(q_i)}\xi^{(i)}-{1\over p}\eta,{1\over q_jM_p(q_j)}\xi^{(j)}-{1\over p}\eta\bigg)\\
&=\frac{\Cov(\xi^{(i)},\xi^{(j)})}{q_iq_jM_p(q_i)M_p(q_j)}+{\Cov(\eta,\eta)\over p^2}- \frac{1}{p}\bigg({\Cov(\xi^{(i)},\eta)\over q_iM_p(q_i)}+{\Cov(\xi^{(j)},\eta)\over q_jM_p(q_j)}\bigg)\\
&=\frac{C_p(q_i,q_j)}{q_iq_jM_p(q_i)M_p(q_j)}+{C_p(p,p)\over p^2}- \frac{1}{p}\bigg({C_p(q_i,p)\over q_iM_p(q_i)}+{C_p(q_j,p)\over q_jM_p(q_j)}\bigg)\,.
\end{align*}
Plugging in \eqref{eq:Mpr} and applying elementary simplifications, we arrive at the explicit representations for $c_{ij}$ in terms of gamma functions.

In the remaining case that $p=\infty$ the above arguments can be repeated by formally putting $M_\infty(\infty):=0$, $C_\infty(\infty,\infty):=0$ and $C_\infty(\infty,q):=0$ for $q\geq 1$.
\end{proof}

\begin{proof}[Proof of Theorem \ref{thm:CLTlpBall}, part (b)]
In the case that $p=q$ and $p<\infty$, the distributional identity in Lemma \ref{lem:SZ} implies that $\|Z\|_q\overset{\text{d}}{=}U^{1/n}$. Therefore, \eqref{eq:UExp} yields the desired result. On the other hand, if $p=q=\infty$, then $\|Z\|_\infty$ has the distribution of the maximum of $n$ independent random variables that are uniformly distributed on $[0,1]$. In this situation, the non-central limit theorem with exponential limiting distribution still holds by a direct computation.
\end{proof}

\begin{proof}[Proof of Theorem \ref{thm:CLTlpBall}, part (c)]
We have the distributional identity
$$
\|Z\|_\infty \overset{d}{=} U^{1/n}{\max(|Y_1|,\ldots,|Y_n|)\over\big(\sum\limits_{i=1}^n|Y_i|^p\big)^{1/p}}\,,
$$
where $U$ is uniformly distributed on $[0,1]$ and, independently of $U$, $Y_1,\ldots,Y_n\sim G_p$ are independent $p$-generalized Gaussian random variables. From the results on p.\ 155 in \cite{EmbrechtsEtAl} it follows that
$$
G_n:={\max(|Y_1|,\ldots,|Y_n|)-d_n\over c_n}\todistr G\,,
$$
where $G$ is Gumbel distributed.
The normalizing constants $c_n$ and $d_n$ can explicitly be chosen as
$$
c_n = (p\log n)^{{1\over p}-1}
$$
and
$$
d_n = (p\log n)^{1\over p}+{1\over p}(p\log n)^{{1\over p}-1}((1-p)\log(p\log n)+p\log K)
$$
with $K=(p^{1/p}\Gamma(1+{1\over p}))^{-1}$ by using the tail asymptotics of the distribution function of $p$-generalized Gaussian random variables.

As in the proof of part (a), the central limit theorem implies that
$$
\eta_n = {\sum\limits_{i=1}^n|Y_i|^p-n\over\sqrt{n}} \todistr \eta\,,
$$
with $\eta\sim\mathcal{N}(0,C_p(p,p))$. The Skorokhod-Dudley device (Lemma \ref{lem:Skorohod}) allows us to switch to a probability space and to random variables $\widetilde{G}_n,\widetilde{G},\widetilde{\eta}_n$ and $\widetilde{\eta}$ such that, for all $n\in\N$, $(\widetilde{G}_n,\widetilde{\eta}_n)\overset{d}{=}(G_n,\eta_n)$, $(\widetilde{G},\widetilde{\eta})\overset{d}{=}(G,\eta)$, $\widetilde{G}_n\overset{a.s.}{\longrightarrow}\widetilde{G}$ and $\widetilde{\eta}_n\overset{a.s.}{\longrightarrow}\eta$, as $n\to\infty$. Again, interpreting Landau symbols in the almost sure sense, we thus conclude that
\begin{align*}
\|Z\|_\infty &\stackrel{\text{d}}{=} U^{1/n}{c_n\widetilde{G}_n+d_n\over(n+\sqrt{n}\,\widetilde{\eta}_n)^{1/p}} = \Big(1+\mathcal{O}\Big({1\over n}\Big)\Big)\,{c_n\widetilde{G}_n+d_n\over n^{1/p}(1+\mathcal{O}(1/\sqrt{n}))} \\
&= \Big(1+\mathcal{O}\Big({1\over \sqrt{n}}\Big)\Big){c_n\widetilde{G}_n+d_n\over n^{1/p}}\,.
\end{align*}
As a consequence, we find
\begin{align*}
{n^{1/p}\|Z\|_\infty-d_n\over c_n} &\stackrel{\text{d}}{=} {(c_n\widetilde{G}_n+d_n)(1+\mathcal{O}(1/\sqrt{n}))-d_n\over c_n}\\
&= {c_n\widetilde{G}_n+d_n+\mathcal{O}(1/\sqrt{n})c_n\widetilde{G}_n+\mathcal{O}(1/\sqrt{n})d_n-d_n\over c_n}\\
&= \widetilde{G}_n+\mathcal{O}\Big({1\over\sqrt{n}}\Big)\,\widetilde{G}_n+\mathcal{O}\Big({1\over\sqrt{n}}\Big)\,{d_n\over c_n}\,.
\end{align*}
Noting that almost surely, $\mathcal{O}\big({1\over\sqrt{n}}\big)\widetilde{G}_n\to 0$ and that by definition of $d_n$ and $c_n$ also $\mathcal{O}\big({1\over\sqrt{n}}\big){d_n\over c_n}\to 0$, as $n\to\infty$, we have thus proved that the right-hand side converges a.s.\ to $\widetilde{G}$.  On the level of distributional convergence, this implies that 
$$
{n^{1/p}\|Z\|_\infty-d_n\over c_n} \todistr \widetilde{G}\,,
$$
which  is precisely the claim of part (c) of Theorem \ref{thm:CLTlpBall}.
\end{proof}

\begin{remark}\label{rem:almost disjoint}
Note that
\begin{align*}
& \Pro(\|Z\|_q\leq 1) = \Pro\bigg(\sqrt{n}\bigg(n^{1/p-1/q}\frac{1}{M_p(q)^{1/q}}\|Z\|_q-1\bigg)\leq \sqrt{n}\Big(n^{1/p-1/q}\frac{1}{M_p(q)^{1/q}}-1\Big) \bigg)\,.
\end{align*}
According to Theorem \ref{thm:CLTlpBall} the random variable on the left hand side satisfies a central limit theorem, while the right hand side tends to $-\infty$, as $n\to\infty$, if $p>q$. Thus,
$$
\lim_{n\to\infty} \Pro(\|Z\|_q\leq 1)  = 0\,.
$$
Roughly speaking this means that in high dimensions, $\mathbb B_p^n \cap \mathbb B_q^n$  is negligible compared to $\mathbb B_p^n$.
\end{remark}

\begin{remark}\label{rem:volume cone measure}
The results of Theorem \ref{thm:CLTlpBall} (a) and (c) continue to hold if $Z$ is chosen on the boundary of $\B_p^n$ with respect to the so-called cone measure $\mu_p$. The latter is defined as
$$
\mu_p(A) := {\vol_n(\{rx:x\in A,r\in[0,1]\})\over\vol_n(\mathbb B_p^n)}\,,
$$
where $A$ is any measurable subset of the boundary of $\B_p^n$. This can be directly seen from the proofs, since the radial part $U^{1/n}$ was negligible in both cases, as $n\to\infty$. For part (b) we notice that if $Z$ lies on the boundary of $\B_p^n$, then trivially $\|Z\|_p=1$.
\end{remark}

\begin{remark}
It is possible to prove  the following result  combining  parts (a) and (c) of Theorem~\ref{thm:CLTlpBall}.  Consider some $1\leq q_1 < \ldots < q_d < \infty$ and let $p\in [1,\infty)$ be such that $p\neq q_i$ for all $i\in \{1,\ldots,d\}$. Then, the $(d+1)$-dimensional random vector
$$
\left( \sqrt{n}\,\bigg(\frac{n^{1/p-1/q_1}\|Z\|_{q_1}}{M_p(q_1)^{1/q_1}}-1\bigg),\ldots,  \sqrt{n}\,\bigg(\frac{n^{1/p-1/q_d}\|Z\|_{q_d}}{M_p(q_d)^{1/q_d}}-1\bigg), 
\frac{n^{1/p} \|Z\|_\infty}{ (p\log n)^{{1\over p}-1}} -A_n^{(p)}\right)
$$
converges as $n\to\infty$ in distribution to $(N,G)$, where $N$ and $G$ are independent and have the same distributions as in (a) and (c).  Indeed, the proof of part (a) is based on the central limit theorem, whereas part (c) is obtained as a consequence of a distributional limit theorem on maxima of random variables. The independence of $N$ and $G$ is a consequence of the well-known fact that the maximum and the empirical mean of an i.i.d.\ sample become  asymptotically independent as the sample size goes to $\infty$ (see, e.g., \cite{Chow79}). We refrain from giving a detailed proof.
\end{remark}

\begin{proof}[Proof of Proposition \ref{prop:neighboring balls}]
For $t_1,\ldots,t_d>0$ we have that
$$
{\vol_n\big(\B_p^n\cap t_1\B_{q_1}^n\cap\ldots\cap t_d\B_{q_d}^n\big)\over\vol_n(\B_p^n)} = \Pro(\|Z\|_{q_1}\leq t_1,\ldots,\|Z\|_{q_d}\leq t_d)\,,
$$
where $Z$ is chosen uniformly at random in $\B_p^n$. Recalling from \eqref{eq:DefXinEtan} the definitions of $\xi_n^{(i)}$ and $\eta_n$ as well as their properties shown in the proof of Theorem \ref{thm:CLTlpBall}, we conclude from Lemma \ref{lem:SZ} that
$$
\bigg(\|Z\|_{q_i}\bigg)_{i=1}^d \overset{\text{d}}{=} \bigg( U^{1/n}{(nM_p(q_i))^{1/q_i}\over n^{1/p}}{(1+{\xi_n^{(i)}\over \sqrt{n} M_p(q_i)})^{1/q_i}\over(1+{\eta_n\over \sqrt{n}})^{1/p}}\bigg)_{i=1}^d\,.
$$
The first and the third term converge in probability and hence in distribution to $1$. Moreover, $
{M_p(q_i)^{1/q_i}}\rightarrow 1$, as $n\to\infty$. Finally, using the choice \eqref{eq:Defq} of $q_i=q_i(n)$, we find that
\begin{align}\label{eq:NearByEq1}
{n^{1/q_i}\over n^{1/p}} = e^{({1\over q_i}-{1\over p})\log n} = e^{-{\alpha_i+o(1)\over p^2\log n+\alpha_i p+o(1)}\log n}= e^{-{\alpha_i+o(1)\over (p^2+o(1))\log n}\,\log n} \to e^{-\alpha_i/p^2}\,,
\end{align}
as $n\to\infty$. We have thus proved the claim.
\end{proof}

\section{A Large Deviation Principle for the $q$-norm}\label{sec:ProofLDP}

We now prove a large deviation counterpart to the central limit theorem. We need to consider two cases, $p<q$ and $p>q$, separately. The proofs follow the general ideas of \cite{APT16} and \cite{GantertKimRamanan}. Again, let $Z$ be a point chosen uniformly at random in $\B_p^n$. We will show a large deviations principle for the sequence of random variables
\[
(n^{1/p-1/q}\|Z\|_q)_{n\in\N}\,.
\]

\subsection{The regime $p>q$}

Let us start with the case that $1\leq p<\infty$ and $p>q$, the special case that $p=\infty$ shall be treated separately below.

\begin{proof}[Proof of Theorem \ref{thm:LDPp>q}]
First note that for each $n\in\N$,
\[
n^{1/p-1/q}\|Z\|_q\stackrel{\text{d}}{=}n^{1/p-1/q}U^{1/n}\frac{\|Y\|_q}{\|Y\|_p}
\]
according to Lemma \ref{lem:SZ}, where $Y=(Y_1,\ldots,Y_n)$ is a vector of independent $p$-generalized Gaussian random variables and $U$ is an independent random variable uniformly distributed on $[0,1]$. Let us define
\[
W_n := n^{1/p-1/q}\frac{\|Y\|_q}{\|Y\|_p}\,,\qquad n\in\N\,.
\]
For $n\in\N$, we further define the random vector
\[
S_n:= \frac{1}{n}\sum_{i=1}^n \big(|Y_i|^q,|Y_i|^p\big)\,.
\]
Then, for $t=(t_1,t_2)\in\R^2$,
\[
\Lambda(t_1,t_2)= \log \E e^{\langle t, (|Y_1|^q,|Y_1|^p) \rangle} = \log \int_0^\infty e^{t_1s^q+(t_2-1/p)s^p}\frac{\dif s}{2p^{1/p}\Gamma(1+1/p)}
\]
is the log-moment generating function of $S_n$. Its effective domain is $\R\times(-\infty,1/p)$ if $q<p$ and $\{(t_1,t_2)\in\R^2\,:\,t_1+t_2<1/p\}$ for $q=p$. Therefore, by Cram\'er's theorem (Lemma \ref{lem:cramer}), the sequence ${\bf S}:=(S_n)_{n\in\N}$ satisfies an LDP in $\R^2$ with speed $n$ and good rate function $\Lambda^*$, the Legendre-Fenchel transform of $\Lambda$. One can check that $\Lambda^*(t_1,t_2)= + \infty$ if $t_1\leq 0$ or $t_2\leq 0$. This implies that the sequence ${\bf S}$ also satisfies an LDP on $[0,\infty)\times(0,\infty)$ with the same good rate function $\Lambda^*$.

Next, we define the continuous function
\[
F:[0,\infty)\times(0,\infty)\to\R,\quad (x,y)\mapsto x^{1/q}y^{-1/p}\,.
\]
Note that, for each $n\in\N$, $W_n\overset{d}{=}F(S_n)$. Thus, by the contraction principle (Lemma \ref{lem:contraction principle}), the random sequence ${\bf W}:=(W_n)_{n}$ satisfies an LDP on $\R$ with speed $n$ and good rate function
\[
\mathcal I_{\bf W}(z) =
 \begin{cases}
\inf\limits_{x\geq 0,y>0\atop{x^{1/q}y^{-1/p}}=z}\Lambda^*(x,y) &: z\geq 0\\
+\infty &: z<0\,.
\end{cases}
\]
Finally, let us define the random variables $V_n:=(U^{1/n},W_n)$ and recall from Lemma 3.3 in \cite{GantertKimRamanan} that the sequence ${\bf U}:=(U^{1/n})_{n\in\N}$ satisfies an LDP on $\R$ with speed $n$ and rate function
$$
{\mathcal I}_{\bf U}(z) = \begin{cases}
-\log z &: z\in(0,1]\\
+\infty &:\text{otherwise}\,.
\end{cases}
$$
Since $U^{1/n}$ and $W_n$ are independent, the sequence ${\bf V}:=(V_n)_{n\in\N}$ satisfies an LDP on $\R^2$ with speed $n$ and good rate function
$$
{\mathcal I}_{\bf V} (z_1,z_2) := {\mathcal I}_{\bf U}(z_1)+{\mathcal I}_{\bf W}(z_2)\,,\qquad (z_1,z_2)\in\R^2\,,
$$
see Lemma 2.2 in \cite{Arcones2002} or Proposition 2.6 in \cite{APT16}. Applying once more the contraction principle (Lemma \ref{lem:contraction principle}), this time to the continuous function
$$
F:\R^2\to\R,\quad (x,y)\mapsto xy\,,
$$
we conclude that the sequence of random variables $U^{1/n}W_n\overset{d}{=}n^{1/p-1/q}\|Z\|_q$ ($n\in\N$) satisfies an LDP on $\R$ with speed $n$ and good rate function
$$
\mathcal{I}_{\bf \|Z\|} (z) = \inf_{z=z_1z_2}{\mathcal I}_{\bf V} (z_1,z_2) = \begin{cases}
\inf\limits_{z=z_1z_2\atop z_1,z_2\geq 0}{\mathcal I}_{\bf V} (z_1,z_2) &: z\geq 0\\
+\infty &:\text{otherwise}\,.
\end{cases}
$$
This completes the argument.
\end{proof}

\begin{remark}
We would like to remark that if instead of the uniform distribution the cone measure on $\B_p^n$ is considered (recall the definition in Remark \ref{rem:volume cone measure}), probabilities of the type $\Pro(n^{1/p-1/q}\|Z\|_q\geq x)$ for sufficiently large $x$ were already considered in \cite{ShaoSelfNormalizedLargeDeviations} (see also \cite[Chapter~3]{delaPena_etal_book}) in the context of self-normalized large deviations. The result in \cite{ShaoSelfNormalizedLargeDeviations} applies to far more general situations and its proof is in fact highly technical, while we are relying on elementary principles from large deviation theory.
\end{remark}

\subsection{The regime $p<q$}

After having investigated the case $p>q$ we now turn to the situation where $1\leq p<q<\infty$. As a matter of fact, this case is more delicate because of the lack of finite exponential moments. However, instead of Cram\'er's theorem we now use a large deviation result for sums of so-called stretched exponential random variables in Lemma \ref{lem:GantertRamananRembart}.

\begin{proof}[Proof of Theorem \ref{thm:LDPp<q}]
For each $n\in\N$, let $Y_1,\ldots,Y_n\sim G_p$ be independent $p$-generalized Gaussian random variables and define
$$
S_n:=\sum_{i=1}^n|Y_i|^q\,.
$$
We are going to apply Lemma \ref{lem:GantertRamananRembart} to prove an LDP for the random sequence ${\bf S}:=(S_n)_{n\in\N}$. Using the well-known fact that
$$
{z\over z^p+1}e^{-z^p/z} \leq \int_z^\infty e^{-t^p/p}\dif t\leq{1\over z^{p-1}}e^{-z^p/p}\,,\qquad z\geq 0\,,
$$
one easily verifies that, for sufficiently large $z$,
$$
c_1(z) e^{-b(z)z^{p/q}} \leq \Pro(|Y_1|^q\geq z)\leq c_2(z) e^{-b(z)z^{p/q}}
$$
with suitable slowly varying functions $c_1,c_2,b:(0,\infty)\to(0,\infty)$, where $b(z)$ satisfies $b(z)\to{1\over p}$, as $z\to\infty$. Thus, Lemma \ref{lem:GantertRamananRembart} implies that $\bf S$ satisfies an LDP on $\R$ with speed $n^{p/q}$ and good rate function
$$
{\mathcal I}_{\bf S}(z) = \begin{cases}
{1\over p}(z-M_p(q))^{p/q} &: z\geq M_p(q)\\
+\infty &:\text{otherwise}\,.
\end{cases}
$$
Moreover, applying the contraction principle (Lemma \ref{lem:contraction principle}) with the function $F(z)=z^{1/q}$, $z\in\R$, we conclude that the sequence $(F(S_n))_{n\in\N}=((S_n)^{1/q})_{n\in\N}$ satisfies an LDP on $\R$ with speed $n^{p/q}$ and good rate function ${\mathcal I}_{\bf \|Z\|} (z)$ from the statement of the theorem.

Recalling Lemma \ref{lem:SZ} we have the distributional identity
$$
n^{1/p-1/q}\|Z\|_q \overset{d}{=}U^{1/n}{\Big({1\over n}\sum\limits_{i=1}^n|Y_i|^q\Big)^{1/q}\over\Big({1\over n}\sum\limits_{i=1}^n|Y_i|^p\Big)^{1/p}}\,.
$$
Now, let us fix some $\delta>0$ and $\varepsilon\in(0,1)$. We observe that
\begin{align*}
&\Pro\left(\,\left|\Big({1\over n}\sum\limits_{i=1}^n|Y_i|^q\Big)^{1/q}-U^{1/n}{\Big({1\over n}\sum\limits_{i=1}^n|Y_i|^q\Big)^{1/q}\over\Big({1\over n}\sum\limits_{i=1}^n|Y_i|^p\Big)^{1/p}}\right|>\delta\right)\\
&=\Pro\left(\,\Big({1\over n}\sum\limits_{i=1}^n|Y_i|^q\Big)^{1/q}\left|1-{U^{1/n}\over\Big({1\over n}\sum\limits_{i=1}^n|Y_i|^p\Big)^{1/p}}\right|>\delta\right)\\
&\leq \Pro\bigg(\Big({1\over n}\sum\limits_{i=1}^n|Y_i|^q\Big)^{1/q}>{\delta\over\varepsilon}\bigg)+\Pro\left(1-{U^{1/n}\over\Big({1\over n}\sum\limits_{i=1}^n|Y_i|^p\Big)^{1/p}}>\varepsilon\right)+\Pro\left(1-{U^{1/n}\over\Big({1\over n}\sum\limits_{i=1}^n|Y_i|^p\Big)^{1/p}}<-\varepsilon\right)\\
&\leq \Pro\bigg(\Big({1\over n}\sum\limits_{i=1}^n|Y_i|^q\Big)^{1/q}>{\delta\over\varepsilon}\bigg) + \Pro\big(U^{1/n}<\sqrt{1-\varepsilon}\,\big)+\Pro\bigg({1\over n}\sum\limits_{i=1}^n|Y_i|^p>(1-\varepsilon)^{-p/2}\bigg)\\
&\hspace{4.6cm}+\Pro\big(U^{1/n}>\sqrt{1+\varepsilon}\,\big)+\Pro\bigg({1\over n}\sum\limits_{i=1}^n|Y_i|^p<(1+\varepsilon)^{-p/2}\bigg)\,.
\end{align*}
By Cram\'er's theorem (Lemma \ref{lem:cramer}), the four last terms decay exponentially with speed $n$ (in fact, the rate functions in the corresponding LDPs do not vanish in $U\setminus\{1\}$, where $U\subset\R$ is an open neighborhood of $1$), while the exponential asymptotic of the first term has already been determined above. As a consequence and since $p/q<1$, we have that
\begin{align*}
&\limsup_{n\to\infty}{1\over n^{p/q}}\log\Pro\left(\,\left|\Big({1\over n}\sum\limits_{i=1}^n|Y_i|^q\Big)^{1/q}-U^{1/n}{\Big({1\over n}\sum\limits_{i=1}^n|Y_i|^q\Big)^{1/q}\over\Big({1\over n}\sum\limits_{i=1}^n|Y_i|^p\Big)^{1/p}}\right|>\delta\right)\\
&\qquad\qquad\leq\limsup_{n\to\infty}{1\over n^{p/q}}\log\Pro\bigg(\Big({1\over n}\sum\limits_{i=1}^n|Y_i|^q\Big)^{1/q}>{\delta\over\varepsilon}\bigg)\,,
\end{align*}
which is $-{1\over p}(({\delta\over\varepsilon})^q-M_p(q))^{p/q}$ if $\delta/\varepsilon\geq M_p(q)^{1/q}$ and $-\infty$ otherwise. Letting $\varepsilon\to 0$, we conclude that the above limit is equal to $-\infty$. Thus, the two sequences $(n^{1/p-1/q}\|Z\|_q)_{n\in\N}$ and $((S_n)^{1/q})_{n\in\N}$ are exponentially equivalent and hence obey the same LDP according to Lemma \ref{lem:exponentially equivalent}. This completes the proof of Theorem \ref{thm:LDPp<q}.
\end{proof}

\begin{remark}\label{rem:LDPCone}
The result of Theorem \ref{thm:LDPp<q} (a) continues to hold if $Z$ is chosen on the boundary of $\B_p^n$ with respect to the cone probability measure (recall Remark \ref{rem:volume cone measure}). This can directly be seen from the proof, since the radial part $U^{1/n}$ is negligible, as $n\to\infty$. In this situation we remark that the LDP could also be concluded from the Sanov-type result in \cite{KimRamanan} via the contraction principle.
\end{remark}


\subsection{The regime $p=\infty$}

We consider now the special case that $p=\infty$.

\begin{proof}[Proof of Theorem \ref{thm:LDPpInfinity}, part (a)]
For $n\in\N$ let $Y_1,\ldots,Y_n$ be independent and uniformly distributed on $[-1,1]$. Then,
$$
n^{1/q}\|Z\|_q \overset{d}{=} \Big({1\over n}\sum_{i=1}^n|Y_i|^q\Big)^{1/q}\,.
$$
By Cam\'er's theorem, the sequence $\bf S$ of random variables $S_n={1\over n}\sum_{i=1}^n|Y_i|^q$ satisfies an LDP on $\R$ with speed $n$ and good rate function $\mathcal{I}_{\bf S}^*$, the Legendre-Fenchel transform of the function
$$
\mathcal{I}_{\bf S}(z) = {1\over 2}\int_{-1}^1e^{z|t|^q}\dif t\,.
$$
Finally, applying the contraction principle (Lemma \ref{lem:contraction principle}) to the continuous function $F(x)=x^{1/q}$, $x\in\R$, yields that $(n^{1/q}\|Z\|_q)_{n\in\N}$ satisfies an LDP on $\R$ with speed $n$ and good rate function
$$
\inf_{F(x)=z}\mathcal{I}_{\bf S}^*(x) = {\mathcal I}_{\bf \|Z\|} (z) \,.
$$
The proof is thus complete.
\end{proof}

\begin{proof}[Proof of Theorem \ref{thm:LDPpInfinity}, part (b)]
If $Z$ is uniformly distributed in $\B_\infty^n$, then $\|Z\|_\infty$ is just the maximum of $n$ independent and uniformly distributed random variables. This maximum has probability density $t\mapsto n(1-t)^{n-1}$, $t\in[0,1]$ and thus coincides in distribution with $U^{1/n}$ with $U$ uniformly distributed on $[0,1]$. The result thus follows from Lemma 3.3 in \cite{GantertKimRamanan}.
\end{proof}

\bibliographystyle{plain}
\bibliography{clt}
\end{document}